\newtheorem{thm}{Theorem}[section]
\newtheorem{lemma}[thm]{Lemma}
\newtheorem{prop}[thm]{Proposition}
\newtheorem{rem}[thm]{Remark}
\numberwithin{equation}{section}
\numberwithin{figure}{section}
\theoremstyle{plain}
\theoremstyle{plain}
\theoremstyle{plain}
\newcommand{\les}{\lesssim}
\newcommand{\lam}{{\lambda}}
\newcommand{\Lam}{{\Lambda}}
\newcommand{\vp}{{\varphi}}
\newcommand{\ve}{{\varepsilon}}
\newcommand{\de}{{\delta}}
\newcommand{\al}{{\alpha}}
\newcommand{\R}{{\mathbb R}}
\newcommand{\Z}{{\mathbb Z}}
\newcommand{\C}{{\mathbb C}}
\def\normo#1{\left\|#1\right\|}
\def\bra#1{\left\langle #1\right\rangle}
\def\wt#1{\widetilde{#1}}
\def\wh#1{\widehat{#1}}
\newcommand{\brad}{\langle D \rangle}
\newcommand{\thez}{{\theta_0}}
\newcommand{\theo}{{\theta_1}}
\newcommand{\thet}{{\theta_2}}
\newcommand{\theth}{{\theta_3}}
\newcommand{\thej}{{\theta_j}}
\begin{document}

	\title[Dirac equations with CSP gauge field]{Cauchy problem for Dirac equations with Chern-Simons-Proca
		gauge field}

\author{Hyungjin Huh}
\address{Department of Mathematics, Chung-Ang University, Seoul, 06974,  Republic of Korea}
\email{huh@cau.ac.kr}

\author{Kiyeon Lee}
\address{Stochastic Analysis and Application Research Center(SAARC), Korea Advanced Institute of Science and Technology, 291 Daehak-ro, Yuseong-gu, Daejeon, 34141, Republic of	Korea}
\email{kiyeonlee@kaist.ac.kr}

	\thanks{2020 {\it Mathematics Subject Classification.} 35Q41, 35Q55, 35Q40.}
	\thanks{{\it Keywords and phrases.} Dirac equations, Chern-Simons-Proca field, global existence, scattering theory, space-time resonance.}
	
	\begin{abstract}
In this paper, we consider the Cauchy problem of  Dirac equations with Chern-Simons-Proca(CSP) gauge field. We investigate  global well-posedness and scattering theory for the solutions  with small initial data.  Main difficulties come from the fact that Strichartz estimate does not work and an absence of spinorial null structure which disturbs to show the global existence.  To overcome these obstacles, we employ the space-time resonance argument introduced by Germain-Masmoudi-Shatah \cite{gemash-2008,gemash-2012-jmpa,gemash-2012-annals}, as well as various resonance functions derived from Dirac operators and the normal form approach.  Our argument enables us to establish the global in time existence of solutions to Dirac equations with CSP gauge field. As a byproduct of our argument, we obtain the scattering results.
	\end{abstract}

		\maketitle

\tableofcontents

\section{Introduction}
We consider the Dirac equations with Chern-Simons-Proca(CSP) field:
\begin{align}\label{CSP-Dirac}
	\left\{\begin{aligned}
	&i \gamma^\mu \textbf{D}_\mu \psi = m \psi,\\
	 \frac12  & \epsilon^{\mu\nu\rho} F_{\mu\nu} + \lam A_\rho = -J^\rho,\\
	\end{aligned} \right.
\end{align}
where  the spinor field  is $\psi : \R^{1+2} \to \C^2$, the gauge fields are $A_\mu : \R^{1+2} \to \R$, and the covariant derivative is  $\textbf{D}_\mu = \partial_\mu -iA_\mu$.  The curvature is defined by $F_{\mu\nu} = \partial_\mu A_\nu - \partial_\nu A_\mu$. 
$\epsilon^{\mu\nu\rho}$ is the totally skew-symmetric tensor with $\epsilon^{012}=1$ and $\lam>0$ is a Proca coupling constant. The Dirac gamma matrices $\gamma^\mu$ are defined by
\begin{align*}
	\gamma^0 = \begin{pmatrix} 1 & 0 \\ 0 & -1	\end{pmatrix},\quad 	\gamma^1 = \begin{pmatrix} 0 & 1 \\ -1 & 0	\end{pmatrix}, \quad 
	\gamma^2 = \begin{pmatrix} 0 & -i \\ -i & 0	\end{pmatrix}.
\end{align*}
We also define $J^\mu = \bra{\psi, \, \al^\mu \psi}$ with $\al^\mu = \gamma^0 \gamma^\mu$ for $\mu =0, \,1, \,2$. The $\langle \cdot , \cdot \rangle$ denotes a standard complex inner product. Greek indices mean the space-time components $\mu,\nu = 0,\, 1,\,2$ and roman indices mean the spatial components $j=1,\, 2$ in the sequel. The mass parameter is $m \ge 0$. In this paper we only consider the massive case $m>0$.

Let us consider Lorenz gauge condition  $\partial^\mu A_\mu =0$. 
The static form of  the second equations in \eqref{CSP-Dirac} with above gauge condition can be described as follows:
\begin{align}\label{eq:static}
	\left\{\begin{aligned}
	& \partial_1 A_0 + \lam A_2 = J^2,\\
	& \partial_2 A_0 - \lam A_1 = -J^1,\\
	& F_{12} +  \lam A_0 = -J^0,\\
	& \partial_1 A_1 + \partial_2 A_2 = 0.
	\end{aligned} \right.
\end{align}
Combining Dirac part in \eqref{CSP-Dirac} and stationary gauge fields \eqref{eq:static}, we consider the following Cauchy problem  for  Dirac equations with Chern-Simons-Proca gauge fields:
\begin{align}\label{eq:CSP-Dirac}
	\left\{\begin{aligned}
i\gamma^\mu \textbf{D}_\mu \psi &= m \psi,\\
(\lam^2 - \Delta) A_1 &= \lam J^1 - \partial_2 J^0,\\
(\lam^2 - \Delta) A_2 &= \lam J^2 + \partial_1 J^0,\\
(\lam^2 - \Delta) A_0 &= - \lam J^0 + \partial_2 J^1 - \partial_1 J^2,
	\end{aligned} \right.
\end{align}
with initial data $\psi(0,x) = \psi_0(x)$.

 Chern-Simons-Proca model is a gauge theory that combines the Chern-Simons term and the Proca term, which is a model of the electromagnetic interaction. The CSP-Dirac system describes the weak interaction of the Dirac fermions and intermediate vector Boson. The $(1+2)$ dimensional Chern-Simons gauge theories give a useful description to fractional quantum Hall effect and anyonic superconductivity.  Especially,
the Abelian Chern-Simons theories \cite{Du, Hor} have  attracted much attention for both physicists and mathematicians.

The  Proca term is a classical way of giving a mass to gauge fields. From the mathematical point of view, the Proca term plays an important role in proving a non-trivial solution to the equations \eqref{eq:CSP-Dirac}. 
In fact, if the Proca term vanishes, $\lambda=0$ in \eqref{eq:CSP-Dirac}, then we have Laplace operator $-\Delta$  for gauge fields instead of Klein-Gordon operator $-\Delta + 1$.  Then the gauge fields are represented by     
\[
A = c\int_{\Bbb{R}^2} \frac{x_j-y_j}{ |x-y|^2} J (y) dy,
\]
which gives us difficulties in estimating $A_{\alpha}$. 
The   Proca term $(\lambda \neq 0)$ is important in our analysis.
Several mathematical papers \cite{A, Dr, H19, tsu2003} have dealt with Maxwell-Proca equations. The Chern-Simons-Proca system has been studied in \cite{dong2021, huh2010, Sa} (see also section 10.1 in \cite{book-aithey}).

In this paper, we are concerned with the global well-posedness and linear scattering theory for the system \eqref{eq:CSP-Dirac}.  Since we consider our Cauchy problem based on the small initial data, the signs of nonlinearity are not relevant. Then we normalize $\lam =1$ and $m=1$ for simplicity. Let us  reformulate \eqref{eq:CSP-Dirac} by multiplying $\beta:=\gamma^0$. 
Then we reduce our main equations \eqref{eq:CSP-Dirac} into
\begin{align}\label{eq:maineq}
		(-i\partial_t + \al \cdot D + m\beta)\psi = N_1(\psi,\psi,\psi) + N_2(\psi,\psi,\psi), 
\end{align}
where
\begin{align*}
	N_1(\psi,\phi,\vp) &= \frac1{1-\Delta} \bra{\phi,\vp}\psi -\frac1{1-\Delta}\bra{\phi,\al^1 \vp}\al^1\psi-\frac1{1-\Delta}\bra{\phi,\al^2 \vp}\al^2\psi,\\
N_2(\psi,\phi,\vp) &= \frac{\partial_1 \bra{\phi,\al^2\vp} - \partial_2 \bra{\phi,\al^1\vp}}{1 -\Delta} \psi + \frac{\partial_2}{1 -\Delta}\bra{\phi,\vp} \al^1 \psi  - \frac{\partial_1}{1 -\Delta}\bra{\phi,\vp} \al^2 \psi.
\end{align*}
Here, $\al = (\al^1,\al^2)$ with $\al^j = \gamma^0 \gamma^j$ and $D= -i\nabla$.

\begin{rem}
The equation \eqref{eq:maineq} obeys mass conservation law. If a solution $\psi$ is sufficiently smooth, then the mass $\|\psi(t)\|_{L_x^2}^2$ is conserved, that is, $\|\psi(t)\|_{L_x^2}^2 = \|\psi_0\|_{L_x^2}^2$ for all $t$ within an existence time interval. 
\end{rem}

There is a large amount of literature dealing with the problems of well-posedness and asymptotic behavior of small solutions of  the Dirac equations with Hartree type nonlinearity. To better put \eqref{eq:maineq} into context in relation with  the global well-posedness and scattering theory,  let us consider the following generalized Hartree type Dirac equations:
\begin{align}\label{eq:dirac-general}
	(-i\partial_t + \al \cdot D + m\beta )\psi = \left(V*\bra{\psi,\Gamma_1 \psi} \right) \Gamma_2\psi	\quad \mbox{on } \, \R^{1+d},   
\end{align}
where $V$ is potential and $\Gamma_1$, $\Gamma_2$ are either of $\gamma^\mu$ or suitable size identity matrix . We first mention  the Coulomb potential $V(x) = |x|^{-1}$, corresponding to $\lam =0$ case in \eqref{eq:CSP-Dirac}. In $d=3$, \cite{ckly2022,cloos} has proved the (modified) scattering results for the case $\Gamma_1 = \Gamma_2=  I_4$. They exploited the spinorial null structure \eqref{proj-commu} and space-time non-resonance  \cite{pusa}. For the generalized Coulomb potential $V(x) = |x|^{-a}(1<a<\infty)$ with $\Gamma_1 = \Gamma_2 =\beta$ and $d=2$, Cho, Ozawa and the second author have shown the linear scattering theory for a small solution with low regularity in \cite{chleoz2022}.  The Yukawa type potential $V=\bra{D}^{-2}$ is related with  our main equations.  In \cite{chle2021-die, tes2d}, the authors have studied the low regularity scattering results for Hartree-type Dirac equations in 2 dimension with small initial data in $L^2(\R^2)$ and  for $\Gamma_1 = \Gamma_2 = \beta$ by utilizing the bilinear estimates based on Strichartz estimates for Klein-Gordon equations.  We also refer to \cite{tes3d, yang}  for the same results for $H^\ve$ in $d=3$.

The nonlinear term of  Dirac equations with Yukawa potential is considerably similar to our nonlinearity $N_1(\psi,\psi,\psi)$ in \eqref{eq:maineq}. The significant difference is  that the nonlinearity  $\bra{D}^{-2}\bra{\psi,\gamma^0 \psi}\gamma^0 \psi$ in \cite{chle2021-die, tes2d} has  the spinorial null structure, which enables to perform the bilinear estimates, which are first introduced by Klainerman and Machedon \cite{klamach1993,klamach1996,klamach1997} for wave equations and developed by many mathematicians in \cite{behe3d,behe2d,ozarog2014, tes2d,tes3d} for Klein-Gordon type equations. In contrast to this fact,  our nonlinearity in \eqref{maineq-decom} has various interactions between the matrices $\Gamma_1$, $\Gamma_2$ included in  $\bra{D}^{-2}(|\psi|^2)\psi$. These interactions disturb to prove global well-posedness by making the bilinear estimates as well as Strichartz estimates not working well. This obstacle comes from the fact  that  we cannot eliminate the resonance case between two spinors without Dirac matrix $\beta$ even though we make use of  the decay effect, arising in potential $\bra{D}^{-2}$. 

In order to show the scattering phenomenon, we have to overcome these difficulties by employing other argumentation,  at the expense of regularities or spatial weight. We describe the main result of this paper.
\begin{thm}\label{thm:mainthm}
	Let $s \ge 50$. Suppose that the initial data $\psi_0$ satisfy the condition
	\begin{align}\label{eq:initial-condition}
		\|\psi_0\|_{H^s} + \|x \psi_0\|_{H^5} \le \ve_0,		
	\end{align}
	for sufficiently small $\ve_0 >0$. Then there exists a unique global solution $\psi \in C(\mathbb{R};H^s)$ to \eqref{eq:maineq}, which scatters in $H^5(\bra{x}^2 dx)$. 
\end{thm}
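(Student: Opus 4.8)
The plan is to run a bootstrap argument in the spirit of the space-time resonance method of \cite{gemash-2008,gemash-2012-jmpa,gemash-2012-annals}.

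\emph{Step 1 (diagonalization and profiles).} I would first split \eqref{eq:maineq} using the energy projections $\Pi_\pm(D)=\tfrac12\big(I\pm\langle D\rangle^{-1}(\al\cdot D+m\beta)\big)$, writing $\psi=\psi_++\psi_-$ with $\psi_\pm:=\Pi_\pm(D)\psi$; each component then solves a half--Klein-Gordon equation
\[
(-i\partial_t\pm\langle D\rangle)\psi_\pm=\Pi_\pm(D)\big(N_1(\psi,\psi,\psi)+N_2(\psi,\psi,\psi)\big).
\]
Passing to the profiles $f_\pm(t):=e^{\pm it\langle D\rangle}\psi_\pm(t)$ and expanding the cubic nonlinearity on the Fourier side via $\psi=\sum_\pm e^{\mp it\langle D\rangle}f_\pm$, the Duhamel formula becomes, schematically,
\[
\widehat{f_{\pm_0}}(t,\xi)=\widehat{f_{\pm_0}}(0,\xi)+\sum_{\pm_1,\pm_2,\pm_3}\int_0^t\!\!\iint e^{is\Phi(\xi,\eta,\sigma)}\,\mathfrak m(\xi,\eta,\sigma)\,\widehat{f_{\pm_1}}\,\widehat{f_{\pm_2}}\,\widehat{f_{\pm_3}}\,d\eta\,d\sigma\,ds,
\]
where $\Phi$ runs over the finitely many resonance functions built from $\pm_0\langle\xi\rangle$ and the three input symbols $\pm_j\langle\cdot\rangle$ (one slot being conjugated, coming from the pairing $\bra{\cdot,\cdot}$), and the symbol $\mathfrak m$ collects the projections $\Pi_\pm$ together with the smoothing factor $\langle\cdot\rangle^{-2}$, resp.\ $\langle\cdot\rangle^{-2}\partial_j$ for the $N_2$ contributions, coming from $(1-\De)^{-1}$. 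Since $50>1$, local well-posedness of \eqref{eq:maineq} in $H^s$ is standard, so everything reduces to a global \emph{a priori} bound.

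\emph{Step 2 (bootstrap norm, dispersive decay, energy estimate).} On $[0,T]$ I would assume
\[
\mathcal N(T):=\sup_{0\le t\le T}\Big(\|f(t)\|_{H^s}+\|xf(t)\|_{H^5}\Big)\le 2\ve_0
\]
and prove $\mathcal N(T)\le\tfrac32\ve_0$. The first ingredient is a Klainerman--Sobolev type dispersive bound for the two-dimensional Klein-Gordon group, $\|e^{\pm it\langle D\rangle}g\|_{W^{k,\infty}}\les\langle t\rangle^{-1}\big(\|g\|_{H^{k+2}}+\|xg\|_{H^{k+1}}\big)$, which applied to $g=f_\pm(t)$ gives $\|\psi(t)\|_{W^{k,\infty}}\les\ve_0\langle t\rangle^{-1}$ for the moderate values of $k$ that the budget $s\ge 50$ affords. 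The top-order energy estimate then closes easily: starting from $\tfrac{d}{dt}\|\psi\|_{H^s}^2=2\,\mathrm{Re}\,\bra{\langle D\rangle^s\, i(N_1+N_2),\,\langle D\rangle^s\psi}$, the key observation is that in $\R^2$ the convolution kernels of both $(1-\De)^{-1}$ and $\partial_j(1-\De)^{-1}$ are integrable, so these operators map $L^\infty\to L^\infty$; combined with a Moser product estimate and the $\langle t\rangle^{-1}$ decay this yields $\tfrac{d}{dt}\|\psi(t)\|_{H^s}^2\les\ve_0^2\langle t\rangle^{-2}\|\psi(t)\|_{H^s}^2$, hence $\|f(t)\|_{H^s}=\|\psi(t)\|_{H^s}\le\ve_0(1+C\ve_0^2)\le\tfrac32\ve_0$.

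\emph{Step 3 (the weighted estimate --- the main obstacle).} The heart of the matter is propagating $\|xf(t)\|_{H^5}=\|\nabla_\xi\widehat f(t)\|_{H^5}$. Differentiating the Duhamel formula in $\xi$, the derivative $\nabla_\xi$ falls either (a) on the symbol $\mathfrak m$, producing a term comparable to the one handled in Step~2; (b) on an input profile $\widehat{f_{\pm_j}}$, producing $\widehat{xf_{\pm_j}}$, absorbed by $\mathcal N(T)$ except in the high$\times$low$\times$low frequency regimes, where only one factor of $\langle t\rangle^{-1}$ is available from the remaining profiles and one must combine with (c); or (c) on the oscillatory factor $e^{is\Phi}$, yielding the genuinely dangerous term with prefactor $s\,\nabla_\xi\Phi$. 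For (c) I would run the Germain--Masmoudi--Shatah trichotomy relative to the relevant $\Phi$: on the region where $\nabla_\eta\Phi$ (or $\nabla_\sigma\Phi$) is bounded below, integrate by parts in $\eta$ (resp.\ $\sigma$), each step gaining $(s|\nabla_\eta\Phi|)^{-1}$ --- which defeats the prefactor $s$ --- and landing an extra $\nabla_\eta$ on a profile, again of $xf$-type; on the region away from $\{\Phi=0\}$, integrate by parts in $s$, i.e.\ a normal form replacing the cubic interaction by a faster-decaying quartic one plus boundary terms controlled by $\mathcal N$; and on a small dyadic neighbourhood of the space-time resonant set $\{\Phi=0,\ \nabla_\eta\Phi=0,\ \nabla_\sigma\Phi=0\}$ --- which for these Klein-Gordon phases has positive codimension --- estimate directly, trading the measure of the neighbourhood and the $L^\infty_\xi$-boundedness of $\widehat f$ against the loss $s$ and using the $\langle\cdot\rangle^{-2}$ smoothing at the frequencies where the set sits. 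It is exactly here that the absence of a spinorial null structure bites: $\mathfrak m$ need not vanish on the resonant set --- in particular for the $\langle D\rangle^{-2}(|\psi|^2)\psi$ piece of $N_1$, whose quadratic form is positive-definite --- so the gain must come entirely from smoothing, the geometry of the resonant set, and the derivative budget. Checking that these suffice for every sign configuration and frequency interaction, using the several distinct Dirac resonance functions, is the technical core and accounts for the large value $s\ge 50$.

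\emph{Step 4 (conclusion).} With $\mathcal N(T)\le\tfrac32\ve_0$ uniform in $T$, the continuity/bootstrap argument gives the global solution $\psi\in C(\R;H^s)$. Since the estimates above in fact show $\|\partial_t f(t)\|_{H^s}$ and the relevant contributions to $\|\partial_t(xf)(t)\|_{H^5}$ to be integrable in time, $f(t)$ converges as $t\to\infty$ in $H^s$ and in $H^5(\langle x\rangle^2\,dx)$; undoing the profile change and the diagonalization produces the scattering state, which is the claim of Theorem~\ref{thm:mainthm}.
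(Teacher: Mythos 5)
Your overall framework --- diagonalizing with $\Pi_\pm(D)$, passing to profiles, bootstrapping a norm combining $H^s$ and a first-order weighted Sobolev norm, and running the Germain--Masmoudi--Shatah trichotomy on the cubic Duhamel integral --- is exactly the paper's. But two points are either wrong or miss the idea the paper actually needs.

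First, the dispersive estimate in your Step~2 is false as stated in $d=2$. A bound $\|e^{\pm it\bra{D}}g\|_{W^{k,\infty}}\les\bra{t}^{-1}(\|g\|_{H^{k+2}}+\|xg\|_{H^{k+1}})$ would require second-order weights in two space dimensions; with only $\|xg\|$ under control one cannot reach the sharp rate. The paper is explicit about this and settles for a stationary-phase argument (Proposition~\ref{prop:timedecay}) giving $\|\psi(t)\|_{W^{k,\infty}}\les\bra{t}^{-3/4}$, which is more than the $t^{-1/2-\ve}$ needed to integrate a cubic nonlinearity. This does not break your strategy, but you would need to weaken the claimed rate, and the exponent then feeds into the budget that fixes $s\ge 50$.

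Second, and more seriously, your Step~3 does not actually close the worst case. After you isolate the term with prefactor $s\,\nabla_\xi\phi_\Theta$ and localize frequencies, the dangerous interaction is $\theta_2\ne\theta_3$ with all frequencies comparable and the further sign choice $\theta_0\ne\theta_1$, $\theta_0\ne\theta_2$ (the $(+,-,-,+)$ configuration). There the phase is genuinely time-resonant (so you cannot normal-form away the loss of $s$), and the space-resonance integration by parts in $\eta$ or $\sigma$ produces a singular factor $L^{-1}$ that the multiplier $\nabla_\xi\phi_\Theta$ alone does \emph{not} cancel, because $\nabla_\xi\phi_\Theta$ does not vanish at $\eta=0$ or $\sigma=0$ for these signs. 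Your proposal to ``estimate directly near the resonant set, trading the measure of the neighbourhood against the loss $s$'' is not carried out and it is not clear it can be; this is precisely where the paper introduces a new device. The paper rewrites the vector multiplier as
\[
\thez\frac{\xi}{\bra{\xi}}-\theo\frac{\xi+\eta}{\bra{\xi+\eta}}
\;=\;\nabla_\xi\phi_\Theta(\xi,\eta,\sigma)-\nabla_\sigma\phi_\Theta(\xi,\eta,\sigma),
\]
and treats the two pieces differently: the first piece, in the present sign configuration, \emph{does} vanish at $\sigma=0$ and hence supplies the needed null structure that cancels the singular factor from the $\eta$-integration by parts; the second piece is exactly $(is)^{-1}\nabla_\sigma e^{is\phi_\Theta}$, so integrating by parts in $\sigma$ removes the offending prefactor $s$ altogether and lands the $\sigma$-derivative on a profile, which is controlled by the $\|xf\|_{H^5}$ bootstrap norm. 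Without this decomposition, or some substitute, the space-time resonant case is left open, and your sketch explicitly punts on it.

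Finally, a small overstatement: the theorem asserts scattering in $H^5(\bra{x}^2\,dx)$, not in $H^s$. The paper's weighted estimate (Proposition~\ref{prop:energy-weight}) shows that $\bra{x}f_\theta(t)$ is Cauchy in $H^5$ with a polynomial rate; the $H^s$ norm is merely bounded uniformly, and nothing in the argument makes $f_\theta(t)$ Cauchy in $H^s$, so your final sentence claims more than is proved.
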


 Our argument has not been confined to the structure of nonlinearity. We can apply our argument for \eqref{eq:dirac-general} with arbitrary matrices $\Gamma_1$, $\Gamma_2 \in M_{2\times 2}(\C)$. For these reasons, it suffices to treat
\[
 \bra{D}^{-j}\bra{\psi,\al^\mu \psi} \al^\nu \psi,
\]
 for all $\mu$, $\nu=0,1, 2$ and $j =1, 2$. Especially, we fix $j =1$.

\begin{rem}
The system \eqref{CSP-Dirac} without Proca field $(\lam =0)$   was considered as Chern-Simons-Dirac(CSD)  equations. Since the singularity arising from potential  disrupts to verify the global well-posedness and scattering theory even if we impose  the static gauge fields condition, there is  only local in time solution. For the local well-posedness results for the CSD system, we refer to  \cite{boucanma2014, huh2007, huoh2016, pec2016,pec2019}. However, when $V= |x|^{-1}$ and $\Gamma_1 = \Gamma_2 = I_2$ in \eqref{eq:dirac-general}, we expect that the argument  in this paper can be  applied to show the global well-posedness and modified scattering phenomenon with suitable phase correction $($see \cite{kly2023}$)$.
\end{rem}

\subsection{Reformulation for Dirac equations} 
We use the representation of solution based on the massive Klein-Gordon equation. For the purpose of diagonalizing of \eqref{eq:maineq}, let us define the  projection operators $\Pi_\theta(D)$ by
$$
\Pi_{\theta}(D) := \frac12 \left( I +\theta \frac1{\brad}[\alpha \cdot D + \beta] \right),
$$
where $\left< D\right> := \mathcal F^{-1}\langle \xi \rangle\mathcal F$ and $\langle \xi \rangle := (1 + |\xi|^2)^\frac12$ for any $\xi \in \mathbb R^2$.
Then we get
\begin{align*}
	\al\cdot D + \beta = \brad (\Pi_{+}(D) - \Pi_{-}(D)),
\end{align*}
and
\begin{align}\label{proj-commu}
	\Pi_{\theta}(D)\Pi_{\theta}(D) = \Pi_{\theta}(D), \quad \Pi_{\theta}(D)\Pi_{-\theta}(D) = 0.
\end{align}
We denote $\Pi_\theta(D)\psi $ by $\psi_\theta$. Then the equation \eqref{eq:maineq} becomes the following system of semi-relativistic Hartree equations:
\begin{align}\label{maineq-decom}
	(-i\partial_t +\theta \brad)\psi_\theta = \Pi_{\theta}(D)[\bra{D}^{-1}\bra{\psi,\al^\mu \psi} \al^\nu \psi]
\end{align}
with initial data $\psi_{\theta}(0,\cdot) = \psi_{0, \theta} := \Pi_{\theta}(D)\psi_0$. The free solutions of \eqref{maineq-decom} are $e^{ - \theta it\brad}\psi_{0, \theta}$, where
$$
e^{-\theta it\langle D\rangle}f(x) = \mathcal F^{-1}( e^{- \theta it \langle \xi \rangle}\mathcal F f) = \frac1{(2\pi)^2}\int_{\mathbb R^2} e^{i(x\cdot \xi -\theta t\langle \xi \rangle)}\widehat f(\xi)\,d\xi.
$$
Here $\mathcal F$, $\mathcal F^{-1}$ are Fourier transform, its inverse, respectively. Note that $U(t)\psi_0$ is also the free solution of \eqref{eq:maineq},  where 
\[
U(t):= \Pi_+(D)e^{-it\bra{D}} + \Pi_-(D)e^{it\bra{D}}.
\]
Then, by Duhamel's principle, the Cauchy problem \eqref{maineq-decom} is equivalent to solve the integral equations:
\begin{align}\label{eq:inteq0}
	\begin{aligned}
		\psi_\theta(t) &= e^{-\theta it\langle D\rangle}\psi_{0, \theta}\\
		& \hspace{1cm}+  i \sum_{\substack{\thej \in \{\pm\}\\ j=1,2,3}}\int_0^t e^{-\theta i(t-t')\langle D\rangle}\Pi_{\theta}(D)\bra{D}^{-1}\left[\bra{\psi_\thet(t'),\al^\mu\psi_\theth(t')}\right]\al^\nu\psi_\theo(t')\,dt'.
	\end{aligned}
\end{align}

We call that the solution $\psi$ scatters forward (or backward) in Hilbert space  $\mathcal H$ if there exists $\psi^l \in C(\mathbb R; \mathcal H)$,  linear solutions to $(-i\partial_t + \alpha \cdot D + \beta)\psi^l = 0$, such that
\begin{align*}
	\|\psi(t) - \psi^l(t)\|_{\mathcal H} \to 0\;\;\mbox{ as }\;\;t \to +\infty, \quad (-\infty \,\, \mbox{respectively}).
\end{align*}
Equivalently, $\psi$ is said to scatter forward (or backward) in $\mathcal H$ if there exists  $\psi_{\theta }^l := e^{-\theta it\brad}\varphi_{\theta } \;(\varphi_{\theta} \in  \mathcal H)$ such that
\begin{align*}
	\|\psi_\theta(t) - \psi_{\theta}^l(t)\|_{\mathcal H} \to  0  \;\;\mbox{ as }\;\;t \to +\infty, \quad (-\infty \,\, \mbox{respectively}).
\end{align*}
By time reversal symmetry, it is enough to  consider the forward scattering. In view of  Theorem \ref{thm:mainthm}, it suffices to prove that, for $\theta \in \{+,-\}$, there exists $\phi_\theta$ such that
\begin{align}\label{aim:scattering}
	\normo{\psi_\theta(t) - e^{-\theta it\bra{D}}\phi_\theta }_{H^5(\bra{x}^2 dx)}  \to  0 \,\,\mbox{ as }\;\;t \to +\infty.
\end{align}

\subsection{Strategy of the proof} As we mentioned in known results,  we cannot utilize the bilinear and Strichartz estimates via $U^p$--$V^p$ spaces. For the purpose of obtaining the global well-posedness and showing the scattering phenomenon, we exploit the space-time resonance argument.  This argument was first introduced by Germain, Masmoudi, and Shatah in \cite{gemash-2008,gemash-2012-jmpa,gemash-2012-annals}. It observes a time or space oscillation in the oscillatory integral which nonlinear terms of the main equation make. More precisely, we set an a priori assumption, which incorporates sufficient  time decay to show scattering results at the expense of high regularity and weighted norm conditions. Indeed, we solve \eqref{eq:maineq} in a function space, given by the norm for some $\ve_1 >0$,
\begin{align*}
	\mathcal{S} := \left\{ \psi : \sum_{\theta \in \{\pm\}}\sup_{t \in [0,T]} \left( \|\psi_\theta(t)\|_{H^s} + \| x e^{\theta i t \bra{D}}\psi_\theta(t)\|_{H^5} \right) \le \ve_1\right\}.
\end{align*}
In view of dispersive estimates for Klein-Gordon operator, we need the second order weight condition to obtain global decay $t^{-1}$ in two dimensions. Since the decay effect of the nonlinearity is strong enough to show the linear scattering, we do not pursue the full decay here. However, we need to estimate decay of the weighted Sobolev space $W^{k,\infty}$ for some regularity $k>0$ that requires the initial data condition (see Proposition \ref{prop:timedecay}). This fact enables us to consider only the first-order weighted condition. Then, using the assumption and decay property, we perform the weighted energy estimates. For these estimates, we invoke the Duhamel's principle \eqref{eq:inteq0} with Fourier transform:
\begin{align}\label{eq:duhamel-res}
	\begin{aligned}
	e^{\thez it \bra{\xi}} \wh{ \psi_\thez} (t,\xi) &= \wh{\psi_{0,\thez}}(\xi) + i \sum_{\substack{\thej \in \{\pm\}\\ j=1,2,3}}\int_0^t \int \!\! \!\!\!\int_{\R^2 \times \R^2} e^{is\phi_\Theta(\xi,\eta,\sigma)} \bra{\eta}^{-1} \bra{\wh{f_\thet}(\sigma),\al^\mu \wh{f_\theth}(\eta+\sigma)} \\
	&\hspace{7.5cm}  \times \al^\nu \wh{f_\theo}(\xi-\eta) \,d\sigma d\eta ds,
	\end{aligned}
\end{align}
where a 4-tuple $\Theta = (\thez,\theo,\thet,\theth)$ and the resonant function 
\begin{align*}
\phi_\Theta(\xi,\eta,\sigma) =  \thez \bra{x} -\theo \bra{\xi-\eta}  -\thet \bra{\eta +\sigma} +\theth\bra{\sigma}.	
\end{align*}
 The main quantity that we shall handle is $x e^{\theta it\bra{D}} \psi_\theta$ in $L^2$ which corresponds to $(x - \theta it\bra{D}) \psi_\theta$ in $L^2$. This operator cannot be commute with the equation, however, we may bound the weighted norm as follows. We apply $\nabla_\xi$ to \eqref{eq:duhamel-res} via Fourier transform and then we obtain the worst term of the form
\begin{align}\label{eq:integ-res}
	\int_0^t s \iint_{\R^2 \times \R^2} \nabla_\xi \phi_\Theta(\xi,\eta,\sigma) e^{is\phi_\Theta(\xi,\eta,\sigma)} \bra{\eta}^{-1} \bra{\wh{f_\thet}(\sigma),\al^\mu \wh{f_\theth}(\eta+\sigma)} \al^\nu \wh{f_\theo}(\xi-\eta) \,d\sigma d\eta ds.
\end{align}
To bound \eqref{eq:integ-res}, we need to recover the time growth $s$. To this end, we exploit the space-time resonance argument. Indeed, we estimate \eqref{eq:integ-res} by separating the space resonant case ($\nabla_\eta \phi_\Theta =0$  or $\nabla_\sigma \phi_\Theta =0$), the time resonant case ($ \phi_\Theta =0$), and the space-time resonant case satisfying both resonant cases. The first main proof of our paper is as follows:
\begin{itemize}
	\item[(1)] The phase function $\phi_\Theta$ does not exhibit any non-resonant case when $\thez=\theo,\, \thet = \theth$. Fortunately, case $\thez=\theo$ has a strong null structure $\nabla_\xi \phi_\Theta(\xi,\eta,\sigma)\Big|_{\eta=0}=0$, sufficient to eliminate the resonant cases, i.e., it suffices only to handle the non-resonant case in terms of $\sigma$ oscillation via Lemma \ref{lem:esti-l2-decay} and we can establish to bound \eqref{eq:integ-res}. On the other hand, there is no such null structure when $\thez \neq \theo$ and however, the phase function is non-resonant in time, i.e., we have $|\phi_\Theta| \ge C$. Due to this time non-resonance, we may utilize the normal form approach. This approach leads us to treat the case in which the time derivatives fall on other spinors. To estimate these terms, we develop the bilinear estimates in which the time derivatives give the extra time decay effect (see Lemma \ref{lem:esti-timederi}). This lemma also enables us to recover the time loss $s$.  See the Section \ref{sec:res}.
	\end{itemize}

The rest of the main proof is devoted to considering the resonant case $\thet \neq \theth$. In contrast, since there is no oscillation with respect to $\sigma$ in this case, we cannot use the improved bilinear estimates (Lemma \ref{lem:esti-l2-decay}). Thus, other approaches to obtain an additional time decay effect have to be employed. We give the second main observation of this paper:
 \begin{itemize}
 	\item[(2)]  We decompose the case by considering the frequency support relations $|\xi|$, $|\xi-\eta|,\,|\eta+\sigma|$, and $|\sigma|$ to bound \eqref{eq:integ-res}. There are two approaches to obtain an extra time decay. One approach proceeds using the time non-resonant case. For the time resonant case, we observe the space resonance cases in which there is no oscillation in $\eta$ or $\sigma$,
\[
\nabla_\eta \phi_\Theta(\xi,\eta,\sigma) = 0 \quad \mbox{or}\quad  \nabla_\sigma \phi_\Theta(\xi,\eta,\sigma) = 0.
\]
In this observation, since the extra decay effect accompanies the singularities $|\eta|^{-1}$ or $|\sigma|^{-1}$, it is natural to employ the null structure which can eliminate these singularities. Among the sign relations, the case $\Theta = (+,-,-,+)$ does not give any structure to recover the singularities, even though we consider the multiplier $\nabla_\xi \phi_\Theta$. To avoid this difficulty, we add the $\nabla_\sigma \phi_\Theta$ into the multiplier  $\nabla_\xi \phi_\Theta$. Then this new multiplier gives rise to a sufficient null structure 
\[
\left[\nabla_\xi \phi_\Theta + \nabla_\sigma \phi_\Theta\right] (\xi,\eta,\sigma)\Big|_{\sigma=0}=0
\]
to reduce the singularity, arising from space resonance with respect to $\sigma$. Lastly, we estimate the remainder term with $\nabla_\sigma \phi_\Theta$ by taking advantage of $[\nabla_\sigma \phi_\Theta] e^{is\phi_\Theta} = s^{-1} \nabla_\sigma(e^{is\phi_\Theta})$ with integration by parts in $\sigma$. See the last case in  section \ref{sec:non-res} for the details. 
 \end{itemize}

\begin{rem}
This argument has been applied to show a global well-posedness and  asymptotic behavior of dispersive equations with various non-local differential operators including Dirac operator. We refer to \cite{ckly2022,   iopu2014, pusa, sauwan2021}.	
\end{rem}

\begin{rem}
	Regarding the global well-posedness and scattering results for the non-stationary Chern-Simons-Proca fields, we expect that we set appropriate a priori assumption by exploiting the space-time resonance argument. We treat this topic in the future work.
\end{rem}

\noindent \textbf{Organization.} The paper is organized as follows: In section \ref{sec:preli}, we provide a priori assumption for the bootstrap argument and  some preliminaries, in particular, the time decay effect  under  a priori assumption, Coifman-Meyer operator estimates and crucial lemmas by exploiting space-time resonance method. In section \ref{sec:3}, we prove Theorem \ref{thm:mainthm} and estimate the high Sobolev norm. We give the proof of Proposition \ref{prop:energy-weight} in section \ref{sec:mainproof}, where we also use the space-time resonance method together with the normal form approach by decomposing various resonance cases. 
\\

\noindent{\bf Notations.}
\ \\
$(1)$ $\bra{\cdot}$ denotes $(1+ |\cdot|^2)^{\frac12}$.\\
\ \\
$(2)$ (Mixed-normed spaces) For a Banach space $X$ and an interval $I$, $u \in L_I^q X$ iff $u(t) \in X$ for a.e. $t \in I$ and $\|u\|_{L_I^qX} := \|\|u(t)\|_X\|_{L_I^q} < \infty$. Especially, we abbreviate $L^p=L_x^p$ for the spatial norm and indicate the subscripts for only Fourier space norm. \\

\noindent$(3)$ (Sobolev spaces)  For $s \in \R$, $\|u\|_{H^s} := \|\bra{D}^s u\|_{L^2}$. Moreover, $\|u\|_{W^{s,p}} := \|\bra{D}^s u\|_{L^p}$ $p\neq 2$.\\

\noindent$(4)$ Different positive constants depending only on $a$, $b$ are denoted by the same letter $C$, if not specified. $A \lesssim B$ and 
$A\gtrsim B $ mean that $A \le CB$ and $A \geq  C^{-1}B$  respectively for some $C>0$. $A \sim B$  means  that $A \lesssim B$ and $A \gtrsim B$.\\

\noindent$(5)$ (Littlewood-Paley operators) Let $\rho$ be a Littlewood-Paley function such that $\rho \in C^\infty_0(B(0, 2))$ with $\rho(\xi) = 1$  for $|\xi|\le 1$ and  $\rho_{N}(\xi):= \rho\left(\frac{\xi}{N}\right) - \rho\left(\frac{2\xi}{N}\right)$ for $N \in 2^\mathbb{Z}$. Then we define the frequency projection $P_N$ by $\mathcal{F}(P_N f)(\xi) = \rho_{N}(\xi)\widehat{f}(\xi)$, and also $P_{\le N} := I - \sum_{N' > N}P_{N'}$. In addition $P_{N_1 \le \cdot \le N_2} := \sum_{N_1 \le N \le  N_2}P_k$. For $ N \in 2^\mathbb{Z}$ we denote $\widetilde{\rho_k} = \rho_{k-1} + \rho_k + \rho_{k+1}$. In particular, $\widetilde{P_N}P_N = P_N\widetilde{P_N} = P_N$, where $\widetilde{P_N} = \mathcal{F}^{-1}\widetilde{\rho_N}\mathcal{F}$. Especially we denote $P_N f $ by $f_{N}$ for any measurable function $f$ and  we do not distinguish $P_N$ and $\wt{P_N}$ throughout this paper.\\

\noindent$(6)$ Let $\textbf{A}=(A_i)$, $\textbf{B}=(B_i) \in \R^n$. Then $\textbf{A} \otimes \textbf{B}$ denotes the usual tensor product such that $(\textbf{A} \otimes \textbf{B})_{ij} = A_iB_j$. We also denote a tensor product of $\textbf{A} \in \C^n$ and $\textbf{B} \in \C^m$  by a matrix $\textbf{A} \otimes \textbf{B} = (A_iB_j)_{\substack{i=1,\cdots,n\\i=1,\cdots,m}}$. We often denote $\textbf{A} \otimes \textbf{B}$ simply by  $\textbf{A}\textbf{B}$. We also consider the product of  $x \in \R^3$ and $f \in \C^4$ by $xf = x \otimes f$. Moreover, we denote $\nabla f := (\partial_j f_i)_{ij}$.


\section{Preliminaries}\label{sec:preli}

\subsection{A priori assumption} 
As we mentioned in introduction, our proof of main theorem is based on the bootstrap argument. To perform that, we give a priori assumption   as follows: For $\ve_1 >0$,
\begin{align}\label{eq:apriori}
	\|\psi\|_{\Sigma_T} :=	\|\psi_+\|_{\Sigma_T^+}  + \|\psi_-\|_{\Sigma_T^-} \les \ve_1,
\end{align}
where, for $\theta \in \{+,-\}$,
\begin{align*}
	\|\psi_\theta\|_{\Sigma_T^\theta} := \sup_{t \in [0,T]} \left(
	\|\psi_\theta(t)\|_{H^s} + \|x e^{\theta it\bra{D}} \psi_\theta(t)\|_{H^5}  \right).
\end{align*}

Under this assumption, we have the time decay effect.
\begin{prop}\label{prop:timedecay}
	Assume that $\psi$ satisfies a priori assumption \eqref{eq:apriori} for given $\ve_1$ and $T$. Then we have a  decay
	\begin{align}\label{eq:timedecay}
		\|\psi(t)\|_{W^{k,\infty}} \les \bra{t}^{-\frac34},
	\end{align}
	for any $t \in [0,T]$ and $0 \le k \le 7$.
\end{prop}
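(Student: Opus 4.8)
The plan is to derive \eqref{eq:timedecay} from the dispersive decay estimate for the Klein--Gordon propagator $e^{-\theta it\bra{D}}$ together with the information encoded in the a priori assumption \eqref{eq:apriori}, namely control of $\psi_\theta$ in $H^s$ and of $x\,e^{\theta it\bra{D}}\psi_\theta$ in $H^5$. First I would recall the standard $L^1 \to L^\infty$-type dispersive bound for the two-dimensional Klein--Gordon equation, in the form
\begin{align*}
\normo{e^{-\theta it\bra{D}} P_N g}_{L^\infty_x} \les \bra{t}^{-1} N^{c} \left( \normo{g}_{L^1_x} + \normo{x g}_{L^1_x} \right),
\end{align*}
or more precisely a frequency-localized stationary phase estimate that converts an $L^2$-based weighted norm into an $L^\infty$ bound with a $\bra{t}^{-1}$ gain in two dimensions (this is the place where the second-order weight would be needed for the full $t^{-1}$; with only a first-order weight one loses a bit, which is exactly why the exponent in \eqref{eq:timedecay} is $-\tfrac34$ rather than $-1$). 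For small $t$, i.e. $\bra{t}\sim 1$, the bound is immediate from Sobolev embedding $H^{k+1}(\R^2)\hookrightarrow W^{k,\infty}(\R^2)$ and the $H^s$ part of \eqref{eq:apriori} with $s\ge 50 \gg k+1$.

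For large $t$, the key step is to interpolate between two competing bounds after a Littlewood--Paley decomposition $\psi(t) = \sum_N U(t) P_N(\cdot)$ applied to each $\psi_\theta$. On the one hand, pure Sobolev embedding plus the $H^s$ bound gives $\normo{P_N \psi_\theta(t)}_{W^{k,\infty}} \les N^{k+1-s}\ve_1$ (or a version summable in $N\ge 1$), which handles high frequencies with no time decay needed. On the other hand, for the dispersive part one writes $P_N\psi_\theta(t) = e^{-\theta it\bra{D}} P_N g_\theta(t)$ with $g_\theta(t) := e^{\theta it\bra{D}}\psi_\theta(t)$, and applies the stationary-phase/dispersive estimate to get, schematically,
\begin{align*}
\normo{P_N \psi_\theta(t)}_{L^\infty_x} \les \bra{t}^{-1} N^{a}\left( \normo{P_N g_\theta(t)}_{L^2_x} + \normo{x P_N g_\theta(t)}_{L^2_x} \right) \les \bra{t}^{-1} N^{a'} \ve_1,
\end{align*}
the last inequality using precisely the $\|x e^{\theta it\bra{D}}\psi_\theta\|_{H^5}$ piece of \eqref{eq:apriori} (so $a'$ must be compatible with the $H^5$ regularity of the weighted norm, forcing $k$ to stay below the budget $7$). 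Because the dispersive bound only uses $5$ derivatives on the weighted term, one then trades a fraction of the $\bra{t}^{-1}$ decay against the $N$-loss and sums the geometric series in $N$; choosing the interpolation parameter so that the surviving power of $t$ is $\tfrac34$ and the $N$-powers are summable over $N\in 2^{\Z}$ yields \eqref{eq:timedecay} for all $0\le k\le 7$.

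I would organize the write-up as: (i) state the dispersive lemma for $e^{-\theta it\bra{D}}$ in two dimensions (cite a standard reference, or the relevant lemma later in the paper); (ii) split into $\bra{t}\les 1$ and $\bra{t}\gg 1$; (iii) in the latter regime, Littlewood--Paley decompose, bound low/moderate frequencies by the dispersive estimate fed by the weighted $H^5$ norm, bound high frequencies by Sobolev embedding fed by the $H^s$ norm, and sum in $N$. The main obstacle I anticipate is the bookkeeping of the frequency weights: one must check that the loss of derivatives in the $L^1$- or $L^2$-weighted dispersive estimate (at most $5$ derivatives available on the weighted term, versus up to $7$ derivatives demanded on the left side $W^{k,\infty}$) can be absorbed by spending part of the time decay, i.e. that the chosen interpolation still leaves $\bra{t}^{-3/4}$; this is the only genuinely delicate point, and it is precisely why the statement caps $k$ at $7$ and asks for $s\ge 50$ and a first-order weight in $H^5$ rather than something weaker.
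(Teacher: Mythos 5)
Your strategy is essentially the one the paper uses: a stationary-phase/dispersive argument, split into small and large $t$, with a Littlewood--Paley decomposition in the large-time regime that trades time decay against frequency powers, feeding the weighted $H^5$ part of \eqref{eq:apriori} into the dispersive bound and the $H^s$ part into a crude Sobolev-type high-frequency bound. The paper, however, does not cite a black-box dispersive lemma: it carries out the stationary phase by hand. This matters because the $L^1$-based estimate you write (and the $\langle t\rangle^{-1}$ decay it promises) is not directly available from the a priori information; the hypothesis only gives $\|xf_\theta\|_{L^2}$, not $\|g\|_{L^1}$ or $\|xg\|_{L^1}$, and it is precisely the passage from $L^2$-weighted control to pointwise decay via explicit integration by parts against $\mathbf p_\phi = \nabla_\xi\phi_\theta/|\nabla_\xi\phi_\theta|^2$ (with only one integration by parts, since only one weight is available) that produces the $\langle t\rangle^{-3/4}$ rather than $\langle t\rangle^{-1}$. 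So "cite a standard reference" would need to become "prove the weighted-$L^2$ version directly," which is most of the work.

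There is also a regime you gloss over: the paper splits into three frequency ranges, $N\le t^{-1/2}$, $t^{-1/2}\le N\le t^{1/s}$, and $N\ge t^{1/s}$. Your description covers the middle (dispersive) and high (Sobolev) regimes, but for the very low frequencies $N\le t^{-1/2}$ no integration by parts is performed at all; one simply uses H\"older in Fourier variables together with Sobolev embedding on $\widehat{f_\theta}$ (controlled by $\|\bra{x}f_\theta\|_{L^2}$). This is not a dispersive bound and it is where the $t^{-1/2}$ threshold and the ultimate exponent $3/4 = 1 - 1/4$ come from. In short, your plan is the right one, but you should replace the appeal to a standard dispersive lemma by the explicit phase analysis (nonstationary bounds \eqref{non-stat-1}--\eqref{non-stat-2}, the decomposition around the stationary point $\xi_0$, and a separate crude bound for $N\le t^{-1/2}$), because those details are exactly where the exponent $3/4$ and the cap $k\le 7$ are earned.
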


\begin{rem}
	Regarding the time decay for linear Klein-Gordon equations, we may have the decay $t^{-1}$ for sufficient smooth function $($see \cite{chozxi}$)$. It suffices to obtain the decay $t^{-\frac12 -\ve}$ to handle the cubic nonlinearity. In \eqref{eq:timedecay}, $\frac34$ is not optimal and we may reduce the regularity assumption in Theorem \ref{thm:mainthm} by optimizing the time decay. In this paper, we do not pursue optimality of the regularity $H^s(s \ge 50)$ or $H^5(\bra{x}^2 dx)$ in \eqref{eq:apriori}. Our regularity condition is necessary to obtain $W^{k,\infty}$ decay with the regularity condition $0 \le k \le 7$. 
\end{rem}

\begin{proof} Note that we write
	\[
	\psi_{\theta}(t,x) = \frac1{(2\pi)^2}\int_{\R^2}e^{it\phi_{\theta}(\xi)}\widehat{f_{\theta}}(\xi)d\xi,
	\]
	where
	\begin{align}
		\phi_{\theta}(\xi):={-\theta}\bra{\xi}+\xi\cdot\frac{x}{t}.\label{theta-phase}
	\end{align}
	If $t \les 1$, then by H\"older's inequality we have
	\begin{align*}
		|\bra{D}^k\psi_{\theta}(t,x)| &\les  \|\bra{\xi}^{k-s}\|_{L^2}\|\psi_\theta\|_{H^s} \les \|\psi\|_{\Sigma_{T}}
	\end{align*}
	for $0 \le k \le 7$. Hence it suffices to show that
	\begin{align}\label{eq:claim-decay}
		\begin{aligned} &\|\psi_{\theta}(t)\|_{W^{k,\infty}} \les  t^{-\frac34} \ve_1, 
		\end{aligned}
	\end{align}
	for $t \gg 1$.	For the purpose of showing \eqref{eq:claim-decay}, we take a frequency decomposition of $\psi_{\theta}$ such that
	\[
	\psi_{\theta}(t,x)=\sum_{N\in2^{\mathbb{Z}}}\int_{\R^2} e^{it\phi_{\theta}(\xi)}\rho_{N}(\xi) \widehat{f_{\theta}}(\xi)d\xi.
	\]
	Then we get
	\begin{align*}
		t^{\frac34}|\bra{D}^k\psi_{\theta}(t,x)|\le\sum_{N\in2^{\mathbb{Z}}}I_{N}(t,x),
	\end{align*}
	where
	\[
	I_{N}(t,x)= t^{\frac34}\left|\int_{\mathbb{R}^{2}}e^{it\phi_{\theta}(\xi)}\rho_{N}(\xi)\bra{\xi}^k\widehat{f_{\theta}}(\xi)d\xi\right|.
	\]
	The proof is based on the stationary phase method. For this, we decompose the frequency support as
	\[
	\sum_{N\in2^{\mathbb{Z}}}I_{N}(t,x)=\left(\sum_{N\le t^{-\frac12}}+\sum_{N\ge t^{\frac1s}}+\sum_{t^{-\frac{1}{2}}\le N\le t^{\frac1s}}\right)I_{N}(t,x).
	\]
	The low-frequency part can be estimated as
	\[ \sum_{N\le t^{-\frac12}}I_{N}(t,x)\les\sum_{N\le t^{-\frac12}}t^{\frac34}\|\rho_{N}\|_{L_{\xi}^{\frac43}}\normo{\widehat{f_{\theta}}}_{L_{\xi}^{4}} \les \|\bra{x} f_\theta\|_{L^2} \les \ve_1.
	\]
	For the high-frequency part, we have
	\begin{align*}
		\sum_{N\ge t^{\frac1s}}I_{N}(t,x) & \les\sum_{N\ge t^{\frac1s}}t^{\frac34} N^{1+k}\|\rho_{N}\widehat{f_{\theta}}\|_{L_{x}^{2}} \les \sum_{N\ge t^{\frac1s}}t^{\frac34} N^{8}N^{-s}\|f_{\theta}\|_{H^{s}}\\
		&\les  t^\frac34 t^{(8-s)\frac1s } \|f_{\theta}\|_{H^{s}}\les \ve_1.
	\end{align*}

	Let us now focus on the mid-frequency part. To prove this part, we use both the non-stationary and stationary
	phase methods. 	From \eqref{theta-phase}, we have
	\[
	\nabla_{\xi}\phi_{\theta}(\xi)=-\theta\frac{\xi}{\bra{\xi}}+\frac{x}{t}.
	\]
	Then, when $|x|\ge t$, the phase $\phi_{\theta}$ is non-stationary. Indeed, we have
	\begin{align}
		|\nabla_{\xi}\phi_{\theta}(\xi)|\gtrsim\left|\frac{|x|}{t}-\frac{|\xi|}{\bra{\xi}}\right|\gtrsim1-\frac{|\xi|}{\bra{\xi}}\gtrsim \bra{\xi}^{-2}.\label{non-stat-1}
	\end{align}
	On the other hand, the phase $\phi_{\theta}$ could be stationary around
	$\xi_{0}$ when $|x|<t$:
	\[
	\nabla_{\xi}\phi_{\theta}(\xi_{0})=0\quad \mbox{where}\quad \xi_{0}=-\theta\frac{x}{\sqrt{t^{2}-|x|^{2}}}.
	\]
	We now set $|\xi_{0}|\sim N_{0} \in 2^\Z$. If $N\nsim N_{0}$, then the phase is non-stationary and simple calculation yields that
	\begin{align}
		\Big|\nabla_{\xi}\phi_{\theta}(\xi)\Big|\gtrsim\max\left(\frac{|\xi-\xi_{0}|}{\bra{N}^{3}},\frac{|\xi-\xi_{0}|}{\bra{N_{0}}^{3}}\right).\label{non-stat-2}
	\end{align}
	Let $\mathbf p_\phi = \frac{\nabla_\xi \phi_\theta}{|\nabla_\xi \phi_\theta|^2}$. Then we write the mid-frequency of
	$I_{N}(t,x)$ by integration by parts as follows:
	\[
	t^{\frac34}\int_{\R^2}e^{it\phi_{\theta}(\xi)}\rho_{N}(\xi) \bra{\xi}^k\widehat{f_{\theta}}(\xi)d\xi = \sum_{j = 1}^2I_{N}^{j}(t,x),
	\]
	where
	\begin{align}
		\begin{aligned}\label{eq-i123}
			I_{N}^{1}(t,x) & =it^{-\frac14}\int_{\R^2}e^{it\phi_{\theta}(\xi)}\mathbf p_\phi \nabla_\xi\left(\bra{\xi}^k\rho_N\widehat{f_{\theta}}(\xi)\right)\, d\xi,\\
			I_{N}^{2}(t,x) & =it^{-\frac14}\int_{\R^2}e^{it\phi_{\theta}(\xi)} (\nabla_\xi \mathbf p_\phi)  \left(\bra{\xi}^k\rho_N\widehat{f_{\theta}}(\xi)\right)\, d\xi.
		\end{aligned}
	\end{align}
	By \eqref{non-stat-1} and \eqref{non-stat-2}, we obtain the following bounds independent
	of $N_{0}$:
	\begin{align}
		\begin{aligned}\left|\mathbf p_\phi\right| & \les\bra{N}^{3}N^{-1},\\
			\left|\nabla_{\xi}\mathbf p_\phi\right| & \les\bra{N}^5N^{-2}.
		\end{aligned}
		\label{bound-phase}
	\end{align}
	Using \eqref{bound-phase} and Sobolev embedding $H^1 \hookrightarrow L^p$ ($2 \le p < \infty$), we see that
	\begin{align*}
		\Big|I_{N}^{1}(t,x)\Big| & \les t^{-\frac14}\bra{N}^{3}N^{-1}\left\Vert \nabla_{\xi}\left([\Lam(\xi)]^k\rho_{N}\widehat{f_{\theta}}\right)\right\Vert _{L_{\xi}^{1}}\\
		& \les t^{-\frac14}\bra{N}^{3+k}N^{-1}\left(N\|xf_{\theta}\|_{L^{2}} + N^{1-2\ve}\normo{\wh{f_{\theta}}}_{L_{\xi}^\frac1{\ve}}\right),
	\end{align*}
	which implies that
	\begin{align*}
		\sum_{t^{-\frac{1}{2}}\le N\le t^{\frac1s}}\Big|I_{N}^{1}(t,x)\Big| &\les \sum_{t^{-\frac{1}{2}}\le N \le 1}\Big|I_{N}^{1}(t,x)\Big| + \sum_{1 \le N\le t^{\frac1s}}\Big|I_{N}^{1}(t,x)\Big|\\
		&\les  t^{-\frac14+ \ve}  \|\bra{x}f_{\theta}\|_{L^2}  + t^{-\frac14 +\frac8s}\|\bra{x}f_{\theta}\|_{L^{2}} \les \ve_1.
	\end{align*}
In a similar way to above, we also obtain
	\begin{align*}
		|I_{N}^{2}(t,x)|  & \les t^{-\frac14}\bra{N}^{5}N^{-2}\normo{\bra{\xi}^k\rho_{N}\widehat{f_{\theta}}}_{L_{\xi}^{1}}\\
		& \les t^{-\frac14}\bra{N}^{5+k}N^{-\frac12}\|\bra{x}f_\theta\|_{L^2},
	\end{align*}
	which leads us to
	\[
	\sum_{t^{-\frac{1}{2}}\le N\le t^{\frac1s}}|I_{N}^{2}(t,x)| \les \ve_1.
	\]
	
	It remains to consider the stationary phase case, $N\sim N_{0}$. We
	further decompose $|\xi-\xi_{0}|$ into dyadic pieces $L$. Let  $L_{0}\in2^{\mathbb{Z}}$
	satisfy that $\frac{L_{0}}{2}<t^{-\frac{1}{2}}\le L_{0}$. Then we write
	\[
	t^\frac34\left|\int_{\R^2}e^{it\phi_{\theta}(\xi)}\rho_{N}(\xi)  \bra{\xi}^k\widehat{f_{\theta}}(\xi)d\xi\right|\le\sum_{L=L_{0}}^{2^{10}N}|J_{L}|,
	\]
	where
	\begin{align*}
		J_{L}(t,x):= \left\{\begin{aligned} & t^\frac34\int_{\mathbb{R}^{3}}e^{it\phi_{\theta}(\xi)}\rho_{\le L_{0}}(\xi-\xi_{0})\rho_{N}(\xi) \bra{\xi}^k\widehat{f_{\theta}}(\xi)d\xi\qquad\mbox{when }L=L_{0},\\
			& t^\frac34\int_{\mathbb{R}^{3}}e^{it\phi_{\theta}(\xi)} \rho_{L}(\xi-\xi_{0})\rho_{N}(\xi) \bra{\xi}^k\widehat{f_{\theta}}(\xi)d\xi\quad\qquad\mbox{when }L>L_{0}.
		\end{aligned}\right.
	\end{align*}
	The case $L=L_{0}$ is the stationary one. The fact $L_{0}\sim t^{-\frac{1}{2}}$ and Sobolev embedding  imply that 
	\[
	|J_{L_{0}}|\les t^\frac34 L_{0}^{\frac32}\|\rho_{N}\widehat{f_{\theta}}\|_{L_{\xi}^{4}}\les \ve_1.
	\]
	The case $L>L_{0}$ returns to non-stationary phase. By integration by parts, we write $J_{L}(t,x)$ as in \eqref{eq-i123}:
	\[
	J_{L}(t,x) = \sum_{j = 1}^2 J_{L}^{j}(t,x),
	\]
	where
	\begin{align*}
		\begin{aligned}J_{L}^{1}(t,x) & =it^{-\frac14}\int_{\R^2}e^{it\phi_{\theta}(\xi)}\mathbf p_\phi \nabla_\xi\left(\bra{\xi}^k\widehat{f_{\theta}}(\xi)\rho_{N}(\xi)\rho_{L}(\xi-\xi_{0})\right)d\xi,\\
			J_{L}^{2}(t,x) &
			=it^{-\frac14}\int_{\R^2}e^{it\phi_{\theta}(\xi)}(\nabla_\xi \mathbf p_\phi) \left(\bra{\xi}^k\widehat{f_{\theta}}(\xi)\rho_{N}(\xi)\rho_{L}(\xi-\xi_{0})\right) d\xi.
		\end{aligned}
	\end{align*}
	Each $J_{L}^{j}$ can be estimated similarly to $I_N^j$ with the following phase
	bounds:
	\begin{align*}
		\begin{aligned}\left|\mathbf p_\phi\right| & \les \bra{N}^{3}L^{-1}, \\
			\left|\nabla_\xi \mathbf p_\phi\right| & \les \bra{N}^{5}L^{-2}.
		\end{aligned}
	\end{align*}
	We omit  details of the proof. This finishes the proof of time decay \eqref{eq:timedecay}.
\end{proof}

\subsection{Useful estimates}
Here we give several lemmas, useful in the sequel. We begin with introducing a multiplier lemma about pseudo-product operators, which will be repeatedly used in our main proof.
\begin{lemma}[Coifman-Meyer operator estimates]  \label{lem:coifmey}   
	Assume that a multiplier $\textbf{m}$ satisfies 
	\begin{align*}
			\|\textbf{m}_1\|_{\rm CM} &:= \left\Vert \iint_{\R^2 \times \R^2}\mathbf{m}_1(\xi,\eta)e^{ix\cdot\xi}e^{iy\cdot\eta}d\xi d\eta\right\Vert _{L_{x,y}^{1}(\R^{2+2})} < \infty,\\
			\|\textbf{m}_2\|_{\rm CM} &:= \left\Vert \int \!\!\!\!\!\int\!\!\!\!\!\int_{\R^2 \times \R^2 \times \R^2}\mathbf{m}_2(\xi,\eta,\sigma)e^{ix\cdot\xi}e^{iy\cdot\eta}e^{iz\cdot\sigma}d\xi d\eta\right\Vert _{L_{x,y,z}^{1}(\R^{2+2+2})} < \infty.
	\end{align*}
	Then, for $\frac{1}{p}+\frac{1}{q}=\frac{1}{2}$,
	\begin{align*}
		\left\Vert \int_{\mathbb{R}^{2}}\mathbf{m}_1(\xi,\eta)\widehat{\psi}(\xi\pm\eta)\widehat{\phi}(\eta)d\eta\right\Vert _{L_{\xi}^{2}}\les \|\textbf{m}_1\|_{\rm CM}\|\psi\|_{L^{p}}\|\phi\|_{L^{q}}.
	\end{align*}
Moreover, for $\frac{1}{p}+\frac{1}{q} + \frac1r=\frac{1}{2}$,
	\begin{align*}
	\left\Vert \iint_{\mathbb{R}^{2} \times \R^2}\mathbf{m}_2(\xi,\eta,\sigma)\widehat{\psi}(\xi-\eta)\widehat{\phi}(\eta \pm \sigma) \wh{\vp}(\sigma) \,d\sigma d\eta\right\Vert _{L_{\xi}^{2}}\les \|\textbf{m}_2\|_{\rm CM}\|\psi\|_{L^{p}}\|\phi\|_{L^{q}}\|\vp\|_{L^{r}}.
\end{align*}
\end{lemma}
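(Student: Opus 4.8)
The statement to prove is the Coifman--Meyer operator estimate (Lemma~\ref{lem:coifmey}), in both its bilinear and trilinear forms. The plan is to reduce everything to the physical-side representation of the multiplier and then apply H\"older's inequality together with the boundedness of translation on $L^p$.

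\medskip

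\noindent\textbf{Bilinear case.} First I would write, by Fourier inversion,
\[
\mathbf{m}_1(\xi,\eta) = \iint_{\R^2\times\R^2} K_1(x,y)\, e^{-ix\cdot\xi}\, e^{-iy\cdot\eta}\, dx\, dy, \qquad K_1(x,y) := \frac{1}{(2\pi)^4}\iint \mathbf{m}_1(\xi,\eta)\, e^{ix\cdot\xi}e^{iy\cdot\eta}\, d\xi\, d\eta,
\]
so that $\|K_1\|_{L^1_{x,y}} = \|\mathbf{m}_1\|_{\rm CM}$. Substituting this into the bilinear expression and carrying out the $\eta$-integration (and recalling $\widehat\psi(\xi\pm\eta)$), one recognizes the $\xi$-variable object as the Fourier transform of a product of translates: the operator applied to $(\psi,\phi)$ equals
\[
\iint K_1(x,y)\, [\text{translate of }\psi\text{ by }\mp x]\, \cdot\, [\text{translate of }\phi\text{ by }x - y \text{ or similar}]\, dx\, dy,
\]
up to harmless sign and constant bookkeeping in the arguments. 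Then taking the $L^2_\xi = L^2$ norm, moving it inside the $x,y$ integral by Minkowski's inequality, applying H\"older with $\tfrac1p+\tfrac1q=\tfrac12$ to each fixed-$(x,y)$ product, and using the translation-invariance of the $L^p$ norms, the factors $\|\psi\|_{L^p}\|\phi\|_{L^q}$ come out and what remains is $\int\!\int |K_1(x,y)|\,dx\,dy = \|\mathbf{m}_1\|_{\rm CM}$. This gives the first displayed inequality.

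\medskip

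\noindent\textbf{Trilinear case.} The same scheme applies verbatim: write $\mathbf{m}_2(\xi,\eta,\sigma)$ as the Fourier transform of its kernel $K_2(x,y,z)$ with $\|K_2\|_{L^1_{x,y,z}} = \|\mathbf{m}_2\|_{\rm CM}$, substitute, perform the $\eta$- and $\sigma$-integrations, identify the result as an integral over $(x,y,z)$ of $K_2$ times a product of three translates of $\psi,\phi,\vp$, then apply Minkowski, H\"older with $\tfrac1p+\tfrac1q+\tfrac1r=\tfrac12$, and translation-invariance to obtain $\|\mathbf{m}_2\|_{\rm CM}\|\psi\|_{L^p}\|\phi\|_{L^q}\|\vp\|_{L^r}$. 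One only needs to be slightly careful matching which linear combination of $\xi,\eta,\sigma$ sits in the argument of each factor (the problem statement has $\widehat\psi(\xi-\eta)$, $\widehat\phi(\eta\pm\sigma)$, $\widehat\vp(\sigma)$), but this only permutes and shifts the dummy variables $x,y,z$ and does not affect the $L^1$ norm of the kernel.

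\medskip

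\noindent The argument is essentially soft; the only point requiring care --- and the one I would treat as the main (mild) obstacle --- is the bookkeeping that turns the frequency-side convolution structure into the clean physical-side ``kernel against a product of translates'' form, in particular handling the $\pm$ signs and making sure the change of variables producing the translates is the right one so that each $L^p$ norm is genuinely translation-invariant and factors out. Once that identification is made, Minkowski $+$ H\"older $+$ translation-invariance closes both estimates immediately.
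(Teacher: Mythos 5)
The paper states Lemma~\ref{lem:coifmey} without proof, treating it as a known elementary fact, so there is no ``paper's route'' to compare against; you are supplying a proof from scratch. Your argument is correct and is indeed the standard one for this particular form of the Coifman--Meyer estimate, where the hypothesis is an $L^1$ bound on the kernel $K_1 = \mathcal{F}^{-1}_{\xi,\eta}\mathbf{m}_1$ rather than the classical symbol-derivative conditions. Concretely: after writing $\mathbf{m}_1(\xi,\eta)=c\iint K_1(x,y)e^{-ix\cdot\xi}e^{-iy\cdot\eta}\,dx\,dy$ and substituting, the $\eta$-integral collapses (for, say, the minus sign) to $c\,\widehat{\psi\cdot\tau_y\phi}(\xi)$, the prefactor $e^{-ix\cdot\xi}$ is a modulation that becomes the harmless translation $\tau_x$ on the physical side, and then Minkowski in $(x,y)$, Plancherel to pass from $L^2_\xi$ to $L^2_x$, H\"older with $\tfrac1p+\tfrac1q=\tfrac12$, and translation-invariance of $L^p$, $L^q$ give exactly $\|K_1\|_{L^1}\|\psi\|_{L^p}\|\phi\|_{L^q}$. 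The plus sign only introduces a reflection $\phi(-\cdot)$, which is also an $L^q$-isometry, as you note. The trilinear case is identical in structure. Two points you gloss over but should make explicit in a written version: (i) Plancherel is needed to convert the $L^2_\xi$ norm of a Fourier transform into a physical-space $L^2$ norm before H\"older can be applied, and (ii) the combinations appearing are genuinely of the form ``translate (or reflect) and multiply,'' not arbitrary affine changes, which is what makes the $L^p$ norms factor out cleanly. With those spelled out the proof is complete.
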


These multiplier estimates lead us the following lemma by exploiting the space resonance approach.
\begin{lemma}[Non-resonant estimates]\label{lem:esti-l2-decay}
	Assume that $\psi$ satisfies a priori assumption \eqref{eq:apriori}.   Then we have
	\begin{align}
		\|P_N \bra{\psi_\theta(t),\al^\mu\psi_\theta(t)}\|_{L_x^2} &\les N^{-1} \bra{t}^{-\frac74+\gamma}\ve_1^2 \label{eq:esti-l2-decay}
	\end{align}
for sufficiently small $0 <\gamma \ll 1$ and $\theta \in \{ +,-\}$.
\end{lemma}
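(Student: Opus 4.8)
The plan is to read the left-hand side as a bilinear Fourier integral and run a (non‑)stationary phase analysis in the output frequency. Write $g:=\bra{\psi_\theta,\al^\mu\psi_\theta}$ and $\psi_\theta=e^{-\theta it\brad}f_\theta$, so that by \eqref{eq:apriori} the profile obeys $\|f_\theta\|_{H^s}+\|xf_\theta\|_{H^5}\les\ve_1$, and
\begin{align*}
\wh g(t,\zeta)=\int_{\R^2}e^{it\varphi_\theta(\zeta,\eta)}\,\bra{\wh{f_\theta}(\eta),\al^\mu\wh{f_\theta}(\zeta+\eta)}\,d\eta,\qquad \varphi_\theta(\zeta,\eta):=\theta\bra{\eta}-\theta\bra{\zeta+\eta}.
\end{align*}
Both $\varphi_\theta$ and $\nabla_\eta\varphi_\theta=\theta\big(\tfrac{\eta}{\bra\eta}-\tfrac{\zeta+\eta}{\bra{\zeta+\eta}}\big)$ vanish exactly on $\{\zeta=0\}$, so this interaction is fully (space--time) resonant only at $\zeta=0$; since on $\operatorname{supp}P_N$ we have $|\zeta|\sim N$, we expect no genuine obstruction once $N$ is bounded below in terms of $\bra t$. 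This dichotomy — and the quantitative control of the integration by parts near $\{\zeta=0\}$ — is the whole point.

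I first dispose of the easy cases. For $t\les 1$ the bound $\les N^{-1}\ve_1^2$ follows from mass conservation (Bernstein, $\|P_Ng\|_{L^2}\les N\|g\|_{L^1}\le N\|\psi_\theta\|_{L^2}^2$) for $N\les 1$ and from $\|\psi_\theta\|_{H^s}\les\ve_1$ with $s\ge 50$ for $N\gtrsim1$; hence assume $t\gg1$. For $N\les\bra t^{-7/8}$, again $\|P_Ng\|_{L^2}\les N\|g\|_{L^1}\les N\ve_1^2\le N^{-1}\bra t^{-7/4}\ve_1^2$. It remains to treat $N\gtrsim\bra t^{-7/8}$.

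In this regime I integrate by parts once in $\eta$ via $e^{it\varphi_\theta}=(it|\nabla_\eta\varphi_\theta|^2)^{-1}\nabla_\eta\varphi_\theta\cdot\nabla_\eta e^{it\varphi_\theta}$, getting
\begin{align*}
\wh g(t,\zeta)=\frac{i}{t}\int_{\R^2}e^{it\varphi_\theta(\zeta,\eta)}\,\nabla_\eta\!\cdot\!\Big(\mathbf m_\theta(\zeta,\eta)\,\bra{\wh{f_\theta}(\eta),\al^\mu\wh{f_\theta}(\zeta+\eta)}\Big)\,d\eta,\qquad \mathbf m_\theta:=\frac{\nabla_\eta\varphi_\theta}{|\nabla_\eta\varphi_\theta|^2}.
\end{align*}
The $\eta$-derivative lands either on a profile $\wh{f_\theta}$, producing $\wh{xf_\theta}$ (controlled by \eqref{eq:apriori}), or on $\mathbf m_\theta$. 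Localizing the inputs by $P_{M_1},P_{M_2}$ and setting $M:=\max(M_1,M_2,N)$, the elementary bounds
\begin{align*}
|\nabla_\eta\varphi_\theta(\zeta,\eta)|\gtrsim\frac{|\zeta|}{\bra M^{3}},\qquad |\mathbf m_\theta(\zeta,\eta)|\les\frac{\bra M^{3}}{N},\qquad |\nabla_\eta\mathbf m_\theta(\zeta,\eta)|\les\frac{\bra M^{c}}{N}
\end{align*}
hold on the relevant support, so the localized symbols $\mathbf m_\theta$, $\nabla_\eta\mathbf m_\theta$ have Coifman--Meyer norm $\les\bra M^{c}N^{-1}$. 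After restoring $e^{it\varphi_\theta}$ the profiles become genuine copies of $\psi_\theta$ (or of $xf_\theta$); applying Lemma \ref{lem:coifmey}, I place the undifferentiated spinor in $L^\infty$ using $\|\psi_\theta(t)\|_{W^{7,\infty}}\les\bra t^{-3/4}$ from Proposition \ref{prop:timedecay}, and the remaining factor in $L^2$, bounded by $\ve_1$ with an extra gain $\bra M^{-s}$ (resp.\ $\bra M^{-5}$) as soon as its frequency exceeds $1$. Each term is then $\les \tfrac1t\cdot\tfrac{\bra M^{c}}{N}\cdot\bra t^{-3/4}\cdot\bra M^{-s+c}\ve_1^2$, and summing the geometric series over $M\gtrsim N$ together with the $O(\log\bra t)$ dyadic pieces below $N$ (the source of the $\bra t^{\gamma}$ loss from dyadic summation) gives $\les N^{-1}\bra t^{-7/4+\gamma}\ve_1^2$.

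The main obstacle is the term where $\nabla_\eta$ falls on $\mathbf m_\theta$: naively $\nabla_\eta\mathbf m_\theta\sim|\nabla_\eta^2\varphi_\theta|/|\nabla_\eta\varphi_\theta|^2$ would be of size $N^{-2}$, costing an extra $N^{-1}\sim\bra t^{7/8}$ that would ruin the estimate. This is saved by the cancellation $|\nabla_\eta^2\varphi_\theta(\zeta,\eta)|\les|\zeta|\,\bra M^{-2}$, valid whenever $|\zeta|\les M$ (a difference of two nearly equal Hessians of $\bra\cdot$), which restores $|\nabla_\eta\mathbf m_\theta|\les\bra M^{c}N^{-1}$; in the complementary range all of $|\eta|,|\zeta+\eta|$ are $\les N$ and the same conclusion is immediate. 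This is precisely the bilinear shadow of the strong null structure of the phase near its resonance set emphasized in the introduction. A subordinate point is the directional degeneracy of $\nabla_\eta\varphi_\theta$ (it is as small as $N\bra M^{-3}$ only in a thin set of directions), handled by an angular decomposition, or — in the delicate window $N\les1$, where all frequencies are $\les1$ and $\nabla_\eta\varphi_\theta=-\theta\zeta+O(N^{3})$ is essentially $\eta$-independent — by noting that $\mathbf m_\theta$ then reduces to a Riesz-type multiplier localized to $|\zeta|\sim N$, whose Coifman--Meyer norm is $\les N^{-1}$.
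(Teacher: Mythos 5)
Your approach is the paper's proof: integrate by parts once in $\eta$ using the space non-resonance of $\varphi_\theta$, estimate the resulting pseudo-products via Lemma \ref{lem:coifmey}, and place one spinor in $L^\infty$ with the decay from Proposition \ref{prop:timedecay}; the preliminary Bernstein reduction to $N\gtrsim\langle t\rangle^{-7/8}$ is a tidier way of doing what the paper absorbs into the dyadic summation (where the $\langle t\rangle^{\gamma}$ loss also arises). One imprecision worth noting: the claimed Hessian cancellation $|\nabla_\eta^2\varphi_\theta|\lesssim |\zeta|\langle M\rangle^{-2}$ is false in the high--low regime $|\eta|\sim M\gg 1\gtrsim|\zeta+\eta|$, where in fact $|\nabla_\eta^2\varphi_\theta|\sim\langle\zeta+\eta\rangle^{-1}\sim 1$ (the two Hessians are not nearly equal there); this does not break the argument only because in that same regime the gradient lower bound improves to $|\nabla_\eta\varphi_\theta|\gtrsim M/\langle M\rangle$ — far better than your uniform $|\zeta|\langle M\rangle^{-3}$ — so $|\nabla_\eta\mathbf m_\theta|\lesssim\langle M\rangle^{c}/N$ still holds, exactly as the paper's case split $N_1\sim N_2$ vs.\ $N_1\gg N_2$ in \eqref{eq:space-res-1} makes explicit.
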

\begin{proof}
Taking Fourier transform in the left-hand side of \eqref{eq:esti-l2-decay}, we have 
\begin{align}\label{eq:inte1}
	\mathcal F \left[P_N \bra{\psi_\theta(t), \al^\mu \psi_\theta(t)} \right] (\xi)= \rho_N(\xi) \int_{\R^2} e^{is\varphi_{\theta}(\xi,\eta)} \bra{ \wh{f_\theta}(\eta), \al^\mu\wh{f_\theta}(\xi + \eta)} \,d\eta ,
\end{align}
where
\begin{align}\label{eq:phase-theta}
	\varphi_\theta(\xi,\eta) = \theta( \bra{\eta} -  \bra{\xi + \eta}).
\end{align}
To extract an extra time decay in \eqref{eq:inte1}, we use the space resonance approach. To this end, we note that
\begin{align}\label{eq:space-res-1}
	\left|\nabla_\eta\vp_{\theta}(\xi,\eta) \right| = \left| \frac{\eta}{\bra{\eta}} -  \frac{\xi+\eta}{\bra{\xi+\eta}} \right| \gtrsim \begin{cases}
		\frac{N}{\bra{N_1}^3}             & \mbox{ for } N_1 \sim N_2,\\
		\frac{N_1}{\bra{N_1}} 	 & \mbox{ for } N_1 \gg N_2,
	\end{cases}
\end{align}
where $|\eta| \sim N_1$ and $|\xi+\eta| \sim N_2$ for the dyadic numbers $N_1$, $N_2 \in 2^\Z$. Here we may only consider that $N_1 \gtrsim N_2$ due to a symmetry.  Using a relation 
\[
e^{is \vp_{\theta}} = -i \frac1s \frac{\nabla_\eta \varphi_{\theta} \cdot \nabla_\eta e^{is\vp_{\theta}}}{|\nabla_\eta \varphi_{\theta}|^2},
		\]
 the integration by parts in $\sigma$ leads us that \eqref{eq:inte1} is bounded by the following terms:
\begin{subequations}
	\begin{align}
		&  s^{-1}\rho_N(\xi) \int_{\R^2} e^{is\varphi_{\theta}(\xi,\eta)} \nabla_\eta m_{\theta,\textbf{N}}(\xi,\eta)\bra{ \wh{f_{\theta,N_1}}(\eta), \al^\mu\wh{f_{\theta,N_2}}(\xi + \eta)} \,d\eta, \label{eq:biliear1}\\
		& s^{-1}\rho_N(\xi) \int_{\R^2} e^{is\varphi_{\theta}(\xi,\eta)} m_{\theta,\textbf{N}}(\xi,\eta)\bra{\rho_{N_1}(\eta) \nabla_\eta\wh{f_\theta}(\eta),\al^\mu \wh{f_{\theta,N_2}}(\xi + \eta)} \,d\eta,\label{eq:biliear2}\\
	& s^{-1}\rho_N(\xi) \int_{\R^2} e^{is\varphi_{\theta}(\xi,\eta)} m_{\theta,\textbf{N}}(\xi,\eta)\bra{ \wh{f_{\theta,N_1}}(\eta),\al^\mu \rho_{N_2}(\xi+\eta)\nabla_\eta\wh{f_{\theta}}(\xi + \eta)} \,d\eta,\label{eq:biliear3}
	\end{align}
\end{subequations}
plus a similar term, where
\[
m_{\theta,\textbf{N}}(\xi,\eta) = \frac{\nabla_\eta \varphi_{\theta}(\xi,\eta)}{|\nabla_\eta \varphi_{\theta}(\xi,\eta)|^2}\rho_{N}(\xi)\rho_{N_1}(\eta)\rho_{N_2}(\xi+\eta).
\]
By Lemma \ref{lem:coifmey} with 
	\begin{align*}
	\|\nabla_\eta^\ell m_{\theta,\textbf{N}}\|_{\rm CM} \les \begin{cases}
		N^{-1}N_1^{-\ell} \bra{N_1}^{11+2\ell}            & \mbox{ for } N_1 \sim N_2,\\
		N_1^{-1-\ell} \bra{N_1} 	 & \mbox{ for } N_1 \gg N_2,
	\end{cases}
\end{align*}
for $\ell =0$, $1$,  we obtain 
\begin{align*}
	\sum_{N_1 \sim N_2}\|\eqref{eq:biliear1}\|_{L_\xi^2} \les \sum_{N_1 \sim N_2} s^{-1}N^{-1} N_1^{-1}\bra{N_1}^{13} \|\psi_{\theta,N_1}\|_{L^2} \|\psi_{\theta,N_2}\|_{L^{\infty}} \les N^{-1} s^{-\frac74}  \ve_1^2,
\end{align*}
and
\begin{align*}
	\sum_{N_1 \gg N_2}\|\eqref{eq:biliear1}\|_{L_\xi^2} \les 	\sum_{N_1 \gg N_2} s^{-1}N_1^{-2}  \|\psi_{\theta,N_1}\|_{L^\infty} \|\psi_{\theta,N_2}\|_{L^2} \les N^{-1} s^{-\frac74}  \ve_1^2.
\end{align*}	
Similarly, we have
\begin{align*}
		\sum_{N_1 \sim N_2}\|\eqref{eq:biliear2}\|_{L_\xi^2} &\les \sum_{N_1 \sim N_2}s^{-1}N^{-1} \bra{N_1}^{11} \|P_{N_1}(xf_\theta)\|_{L^2} \|\psi_{\theta,N_2}\|_{L^{\infty}} \les N^{-1} s^{-\frac74}  \ve_1^2,\\
		\sum_{N_1 \gg N_2}\|\eqref{eq:biliear2}\|_{L_\xi^2} &\les \sum_{N_1 \gg N_2} s^{-1}N^{-1}\bra{N} \|P_{N_1}(xf_\theta)\|_{L^2} \|\psi_{\theta,N_2}\|_{L^{\infty}} \les N^{-1}  s^{-\frac74+\gamma}  \ve_1^2.
\end{align*}
Estimates for \eqref{eq:biliear3} can be handled similarly. This finishes the proof of \eqref{eq:esti-l2-decay}.
		\end{proof}	
	
\begin{lemma}\label{lem:esti-timederi}
	Assume that $\psi$ satisfies a priori assumption \eqref{eq:apriori}. Then we have
	\begin{align}
		& \|\partial_t P_Nf_{\theta}(t)\|_{L^2} \les \bra{N}^{-5}\bra{t}^{-\frac32} \ve_1,	\label{eq:esti-timederi-f}\\
		& \|P_N \partial_t \bra{\psi_\theta(t), \al^\mu \psi_\theta(t)}\|_{L^2} \les \bra{t}^{-\frac32+\frac{\gamma}4} \ve_1^2, \label{eq:esti-timederi-bi}
	\end{align}
for sufficiently small $0 < \gamma \ll 1$ and $\theta \in \{+,-\}$.
\end{lemma}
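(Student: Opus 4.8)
The plan is to read off both bounds from the equation \eqref{maineq-decom}, handling \eqref{eq:esti-timederi-f} by a direct substitution and \eqref{eq:esti-timederi-bi} by separating a \emph{temporal null} bilinear piece from a genuinely nonlinear one.

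\textbf{Estimate \eqref{eq:esti-timederi-f}.} Writing $f_\theta=e^{\theta it\brad}\psi_\theta$ and inserting $\partial_t\psi_\theta=-i\theta\brad\psi_\theta+i\,\Pi_\theta(D)[\bra D^{-1}\bra{\psi,\al^\mu\psi}\,\al^\nu\psi]$ from \eqref{maineq-decom}, the linear part cancels and one is left with $\partial_t f_\theta=i\,e^{\theta it\brad}\,\Pi_\theta(D)[\bra D^{-1}\bra{\psi,\al^\mu\psi}\,\al^\nu\psi]$. Since $e^{\theta it\brad}$ is unitary and $P_N\Pi_\theta(D)$ has a bounded symbol localized at $|\xi|\sim N$, it suffices to bound $\big\|\widetilde P_N\big([\bra D^{-1}\bra{\psi,\al^\mu\psi}]\,\al^\nu\psi\big)\big\|_{L^2}$. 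I would Littlewood--Paley decompose all three spinor factors: for $N\gg1$ at least one factor sits at frequency $\gtrsim N$, which I place in $L^2$ gaining $\bra N^{-s}$ from the $H^s$ norm in \eqref{eq:apriori}, putting the other two in $L^\infty$ with $\|\psi(t)\|_{W^{k,\infty}}\les\bra t^{-\frac34}\ve_1$ from Proposition \ref{prop:timedecay}; using also that $\bra D^{-1}$ and $\Pi_\theta(D)\bra D^{-1}$ are bounded on $L^2$ and $L^\infty$, this gives $\bra N^{-s}\bra t^{-\frac32}\ve_1^3\les\bra N^{-5}\bra t^{-\frac32}\ve_1$ because $s\ge50$. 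For $N\les1$ a plain H\"older estimate $L^\infty\!\cdot L^\infty\!\cdot L^2$ already yields $\bra t^{-\frac32}\ve_1^3$.

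\textbf{Estimate \eqref{eq:esti-timederi-bi}.} Since $\al^\mu$ is Hermitian, $\partial_t\bra{\psi_\theta,\al^\mu\psi_\theta}=2\operatorname{Re}\bra{\partial_t\psi_\theta,\al^\mu\psi_\theta}$, and substituting $\partial_t\psi_\theta$ as above splits the right-hand side as $A+B$ with
\[
A:=i\theta\big(\bra{\brad\psi_\theta,\al^\mu\psi_\theta}-\bra{\psi_\theta,\al^\mu\brad\psi_\theta}\big),\qquad
B:=2\operatorname{Im}\bra{\,\Pi_\theta(D)[\bra D^{-1}\bra{\psi,\al^\mu\psi}\,\al^\nu\psi]\,,\,\al^\mu\psi_\theta}.
\]
The term $B$ is easy: bounding the cubic factor in $L^2$ by $\|\psi\|_{L^\infty}^2\|\psi\|_{L^2}\les\bra t^{-\frac32}\ve_1^3$ and $\psi_\theta$ in $L^\infty$ gives $\|P_NB\|_{L^2}\les\bra t^{-\frac94}\ve_1^4$, comfortably inside the claim. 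The term $A$ is the crux: on the Fourier side its symbol is exactly $\bra\eta-\bra{\xi+\eta}=\theta\,\varphi_\theta(\xi,\eta)$ with $\varphi_\theta$ as in \eqref{eq:phase-theta}, so $A$ is a bilinear operator in $\psi_\theta$ whose multiplier vanishes on the (space--)time resonant set $\{\xi=0\}$. I would estimate it by the space-resonance integration by parts in $\eta$ exactly as in the proof of Lemma \ref{lem:esti-l2-decay}, applied to this extra factor $\varphi_\theta$: the single integration by parts produces the gain $s^{-1}$, and the new multiplier $\nabla_\eta\big(\tfrac{\varphi_\theta\nabla_\eta\varphi_\theta}{|\nabla_\eta\varphi_\theta|^2}\rho_N\rho_{N_1}\rho_{N_2}\big)$ carries \emph{no} $|\xi|^{-1}$ blow-up near $\xi=0$, precisely because $|\varphi_\theta|=|\bra\eta-\bra{\xi+\eta}|\les|\xi|$ balances the lower bound $|\nabla_\eta\varphi_\theta|\gtrsim|\xi|\bra{N_1}^{-3}$ from \eqref{eq:space-res-1}; its $\mathrm{CM}$ norm then only loses powers of $\bra{N_1}$, absorbed by the $H^s$ and weighted $H^5$ norms in \eqref{eq:apriori} and by the $W^{7,\infty}$ decay of Proposition \ref{prop:timedecay} whenever $\nabla_\eta$ lands on a profile (resp.\ on $xf_\theta$). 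Applying Lemma \ref{lem:coifmey} and summing the dyadic pieces yields $\|P_NA\|_{L^2}\les s^{-1}\bra t^{-\frac34}\ve_1^2\les\bra t^{-\frac32+\frac\gamma4}\ve_1^2$, the harmless $\bra t^{\gamma/4}$ loss coming, as in Lemma \ref{lem:esti-l2-decay}, from the low-frequency dyadic summation.

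\textbf{Main obstacle.} The delicate point is the term $A$, and in particular checking that the integration by parts in $\eta$ does not reintroduce the factor $|\xi|^{-1}$ in the small-output-frequency regime where two high, comparable input frequencies produce a low output frequency (the neighborhood of the space--time resonance). This is exactly where the null structure $|\bra\eta-\bra{\xi+\eta}|\le|\xi|$ and the full strength of \eqref{eq:space-res-1} have to be used together, and where one must keep careful track of which factor—including the weighted profile $xf_\theta$—absorbs the $\eta$-derivative, so that the resulting $\bra{N_1}$ losses stay summable against the regularity $H^s$, $s\ge50$.
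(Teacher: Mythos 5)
Your proposal is correct and is essentially the paper's argument recast in operator language: your split $A+B$ is exactly the paper's split into the term where $\partial_t$ hits the phase $e^{it\varphi_\theta}$ (giving the multiplier $i\varphi_\theta$) versus the terms where it hits a profile $\widehat{f_\theta}$ (handled by \eqref{eq:esti-timederi-f}), and your treatment of $A$ by integrating by parts in $\eta$ against $\nabla_\eta\varphi_\theta/|\nabla_\eta\varphi_\theta|^2$, using $|\varphi_\theta|\les|\xi|$ against \eqref{eq:space-res-1}, reproduces the paper's multiplier $m_{\mathbf N}=\varphi_\theta\nabla_\eta\varphi_\theta|\nabla_\eta\varphi_\theta|^{-2}\rho_N\rho_{N_1}\rho_{N_2}$.

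One point worth sharpening: after the $\eta$-integration by parts the Coifman--Meyer norm of $\nabla_\eta m_{\mathbf N}$ does not merely lose powers of $\langle N_1\rangle$ — it carries an honest $N_1^{-1}$ (or $N_2^{-1}$) from the derivative landing on the cutoffs, which blows up as $N_1\to 0$. For this reason the paper does \emph{not} rely on Lemma~\ref{lem:coifmey} in the regime $N_1\sim N_2\le\langle t\rangle^{-1/4}$ (resp.\ $N_2\le\langle t\rangle^{-1/4}$ when $N_2\ll N_1$), but instead uses a direct Fourier-side H\"older/Sobolev estimate with exponents $L^{4/(4-\gamma)}_\eta$, $L^{4/\gamma}_\eta$, etc., which trades the singular $N_1^{-1}$ for a positive power $N_1^{2-\gamma}$; summing over $N_1\le\langle t\rangle^{-1/4}$ is exactly where the $\langle t\rangle^{\gamma/4}$ loss in \eqref{eq:esti-timederi-bi} is incurred. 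Your remark that the $\gamma/4$ loss ``comes from the low-frequency dyadic summation'' is therefore the right diagnosis, but the mechanism is not that CM applies with a $\langle N_1\rangle$-loss; it is that CM is abandoned there in favor of H\"older. With this caveat made explicit your proof matches the paper's.
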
	
\begin{proof}
	By Duhamel's principle, recall that
	\[
	\partial_tf_{\theta}(t) = C \Pi_\theta(D)\sum_{\thej \in \{\pm\},j=1,2,3}\left[\bra{D}^{-1}\bra{\psi_\thet(t),\al^\mu\psi_\theth(t)}\right] \al^\nu\psi_\theo(t).
	\]
Then we have
\begin{align*}
		\|\partial_t P_Nf_{\theta}(t)\|_{L^2} &\les \bra{N}^{-5}\sum_{\substack{\thej \in \{\pm\},j=1,2,3}} \| \bra{D}^{-1} \bra{\psi_\thet(t),\al^\mu \psi_{\theth}(t)} \|_{H^5}  \normo{ \psi_\theo}_{W^{5,\infty}}\\
		&\les  \bra{N}^{-5} \bra{t}^{-\frac32}\ve_1^3.
\end{align*}
This finishes the proof of \eqref{eq:esti-timederi-f}.

Let us consider \eqref{eq:esti-timederi-bi}. Since the time derivative falls on the phase function or spinors, the left-hand side of \eqref{eq:esti-timederi-bi} is bounded by the $L^2$-norm of the following terms: 
\begin{subequations}
	\begin{align}
	&\rho_N(\xi) \int_{\R^2}  \vp_{\theta}(\xi,\eta)e^{it\vp_{\theta}(\xi,\eta)}  \bra{\wh{f_\theta}(t,\eta), \al^\mu \wh{f_\theta}(t,\xi + \eta)} \,d\eta,\label{esti-timederi-1}\\
	&\rho_N(\xi) \int_{\R^2} e^{it\vp_{\theta}(\xi,\eta)}  \bra{\partial_t\wh{f_\theta}(t,\eta), \al^\mu \wh{f_\theta}(t,\xi+\eta)} \,d\eta,\label{esti-timederi-2}
	\end{align}
\end{subequations}
and symmetric term where  time derivative $\partial_t$ falls on the other spinor. Here $\vp_\theta$ is defined in \eqref{eq:phase-theta}. To estimate \eqref{esti-timederi-1} in $L^2$, we first localize $|\eta|$, $|\xi +\eta|$ into the dyadic numbers $N_1,N_2 \in 2^\Z$ and we observe the phase function $\vp_{\theta}$: by the symmetry, we may assume that $N_1 \gtrsim N_2$. Then we have
\[
\left|\vp_{\theta}(\xi,\eta) \right| = \left| \bra{\eta} -  \bra{\xi+\eta} \right| \les N
\]
and
\begin{align}\label{eq:space-res-2}
\left|\nabla_\eta\vp_{\theta}(\xi,\eta) \right| = \left| \frac{\eta}{\bra{\eta}} -  \frac{\xi+\eta}{\bra{\xi+\eta}} \right| \gtrsim \begin{cases} 	\frac{N}{\bra{N_1}^3}             & \mbox{ for } N_1 \sim N_2,\\
	\frac{N_1}{\bra{N_1}} 	 & \mbox{ for } N_1 \gg N_2.
\end{cases}
\end{align}
From this observation, one can see that
\[
e^{it\vp_{\theta}(\xi,\eta)} =-\frac{i}{t} \frac{\nabla_\eta \vp_{\theta}(\xi,\eta) \cdot \nabla_\eta e^{it\vp_{\theta}(\xi,\eta)}}{\left|\nabla_\eta\vp_{\theta}(\xi,\eta) \right|^2}.
\]
Using the above relation, we perform the normal form transform to bound \eqref{esti-timederi-1} by the following terms:
\begin{subequations}
\begin{align}
	&\frac{\rho_N(\xi)}{t} \int_{\R^2} e^{it\vp_{\theta}(\xi,\eta)} \nabla_\eta m_{\textbf{N}}(\xi,\eta)  \bra{\wh{f_{\theta,N_1}}(t,\eta), \al^\mu \wh{f_{\theta,N_2}}(t,\xi + \eta)} \,d\eta,\label{esti-timederi-1-1}\\
	&\frac{\rho_N(\xi)}{t} \int_{\R^2} e^{it\vp_{\theta}(\xi,\eta)} m_{\textbf{N}}(\xi,\eta)  \bra{  \rho_{N_1}(\eta) \nabla_\eta  \wh{f_{\theta}}(t,\eta), \al^\mu \wh{f_{\theta,N_2}}(t,\xi + \eta)} \,d\eta,\label{esti-timederi-1-2}\\
		&\frac{\rho_N(\xi)}{t} \int_{\R^2} e^{it\vp_{\theta}(\xi,\eta)} m_{\textbf{N}}(\xi,\eta)  \bra{  \rho_{N_1}(\eta)   \wh{f_{\theta}}(t,\eta), \al^\mu \nabla_\eta \wh{f_{\theta,N_2}}(t,\xi + \eta)} \,d\eta,\label{esti-timederi-1-3}
\end{align}
\end{subequations}
 where
\[
m_{\textbf{N}}(\xi,\eta) := \frac{\vp_{\theta}(\xi,\eta) \nabla_\eta \vp_{\theta}(\xi,\eta)}{|\nabla_\eta \vp_{\theta}(\xi,\eta)|^2}\rho_{N}(\xi)\rho_{N_1}(\eta)\rho_{N_2}(\xi+\eta)
\]
with $\textbf{N} =(N,N_1,N_2)$. Note that $f_\theo$ and $f_\thet$ are not symmetric from the support condition $N_2 \les N_1$. Let us consider $N \ll N_1 \sim N_2$. Then, we have the pointwise and Coifman-meyer bound  
\begin{align}\label{esti-timederi-multi}
	|\nabla_\eta m_{\textbf{N}}(\xi,\eta)|\les   N_1^{-1} \bra{N_1}^5  \;\;\mbox{ and }\;\; \|\nabla_\eta m_{\textbf{N}}\|_{\rm CM}   \les  N_1^{-1}\bra{N_1}^{11}.
\end{align}
Thus, for $N_1 \sim N_2 \le \bra{t}^{-\frac14}$, we use Lemma \ref{lem:coifmey}, Sobolev embedding $H^1 \hookrightarrow L^{p}$ $(2 \le p < \infty)$, and H\"older inequality and, for $N_1 \sim N_2 \ge \bra{t}^{-\frac14}$, we utilize Lemma \ref{prop:timedecay} and  Lemma \ref{lem:coifmey} with \eqref{esti-timederi-multi}. This yields that
\begin{align*}
	&\sum_{ N_1 \sim N_2}\| \eqref{esti-timederi-1-1}\|_{L_\xi^2} \\
	&\hspace{1cm}\les   	\sum_{N_1 \sim N_2 \le \bra{t}^{-\frac14}}t^{-1} \|\nabla_\eta m_{\textbf{N}}\|_{L_{\xi,\eta}^\infty}\|\rho_{N_1}\|_{L_\eta
	^\frac{4}{4-\gamma}}\|\wh{f_{\theta}}\|_{L_\eta^\frac4\gamma}\|\rho_{N_2}\|_{L_\xi^{\frac4{2-\gamma}}}\|\wh{f_{\theta}}\|_{L_\xi^{\frac4\gamma}} \\
&\hspace{2cm}+\sum_{ N_1 \sim N_2 \ge \bra{t}^{-\frac14}}t^{-1}N_1^{-1}\bra{N_1}^{11} \|\psi_{\theta,N_1}\|_{L^{\infty}}\|\psi_{\theta,N_2}\|_{L^2} \\
	&\hspace{1cm} \les 	\sum_{ N_1  \le \bra{t}^{-\frac14}} t^{-1}N_1^{2-\gamma} \|\bra{x}f_\theta\|_{H^5}\|\bra{x}f_\theta\|_{H^5}  \\
	&\hspace{2cm}+\sum_{  N_1  \ge \bra{t}^{-\frac14} }t^{-\frac74}N_1^{-1-\gamma}\bra{N_1}^{11-s}\|\psi_{\theta}\|_{W^{7,\infty}}\|\psi_{\theta,N_2}\|_{H^s} \\
	&\hspace{1cm} \les t^{-\frac32+\frac{\gamma}4}\ve_1^2. 
\end{align*}

For the case $N_2 \ll N_1$,  we may consider the multipliers regardless the sign relations. Indeed, we have the following bounds of multipliers
\begin{align*}
	|\nabla_\eta m_\textbf{N}(\xi,\eta)| \les N_2^{-1} \bra{N_1}    \;\;\mbox{ and } \;\; 	\|\nabla_\eta m_\textbf{N}\|_{\rm CM} \les N_2^{-1} \bra{N_1}.
\end{align*}
 Then, by H\"older inequality, we obtain
\begin{align*}
	\sum_{\substack{N_2 \ll N \sim N_1\\ N_2 \le \bra{t}^{-\frac14}}}	\| \eqref{esti-timederi-1-1}\|_{L_\xi^2}  &\les 	\sum_{\substack{N_2 \ll N \sim N_1\\ N_2 \le \bra{t}^{-\frac14}}}t^{-1}  \|\nabla_\eta m_\textbf{N}\|_{L_{\xi,\eta}^\infty} \|\rho_{N_2}\|_{L_\eta^{\frac2{2-\gamma}}}  \| \wh{f_{\theta,N_1}}\|_{L_\xi^2}\|\wh{f_{\theta}}\|_{L_\eta^{\frac2\gamma}} \\ 
	&\les 	\sum_{\substack{N_2 \ll N \sim N_1\\ N_2 \le \bra{t}^{-\frac14}}}  t^{-1}N_2^{-1}\bra{N_1}^{-4} N_2^{2-\gamma} \|\psi_{\theta}\|_{H^s} \|\bra{x}f_\theta\|_{H^5} \\
	&\les \bra{t}^{-\frac32 + \frac{\gamma}4}\ve_1^2,
\end{align*}
and by Lemma \ref{lem:coifmey}, we have
\begin{align*}
	\sum_{\substack{N_2 \ll N \sim N_1\\ N_2 \ge \bra{t}^{-\frac14}}}	\| \eqref{esti-timederi-1-1}\|_{L_\xi^2}  &\les 	\sum_{\substack{N_2 \ll N \sim N_1\\ N_2 \ge \bra{t}^{-\frac14}}}t^{-1} N_2^{-1} \bra{N_1} \|\psi_{\theo,N_1}\|_{L^\infty}   \|f_{\thet,N_2}\|_{L^2} \\ 
	&\les 	\sum_{\substack{N_2 \ll N \sim N_1\\ N_2 \ge \bra{t}^{-\frac14}}}  \bra{t}^{-\frac74}N_2^{-1}\bra{N_1}^{-6} \|\psi_\theo\|_{W^{7,\infty}} \|\psi_\thet\|_{H^s} \\
	&\les \bra{t}^{-\frac32 }\ve_1^2.
\end{align*}

Let us move on to the estimates for \eqref{esti-timederi-1-2}.  In a similar way to above, we have
\begin{align*}
	&\sum_{ N_1 \sim N_2}\| \eqref{esti-timederi-1-2}\|_{L_\xi^2} \\
	&\hspace{1cm}\les   	\sum_{N_1 \sim N_2 \le \bra{t}^{-\frac14}}t^{-1} \| m_{\textbf{N}}\|_{L_{\xi,\eta}^\infty}\|\rho_{N_1}\|_{L_\eta
		^2}\|x f_{\theta}\|_{L^2}\|\rho_{N_2}\|_{L_\xi^{\frac2{1-\gamma}}}\|\wh{f_{\theta,N_2}}\|_{L_\xi^{\frac2\gamma}} \\
	&\hspace{2cm}+\sum_{ N_1 \sim N_2 \ge \bra{t}^{-\frac14}}t^{-1}\bra{N_1}^{8} \|P_{N_1}(x f_{\theta})\|_{L^2}\|\psi_{\theta,N_2}\|_{L^\infty} \\
	&\hspace{1cm} \les 	\sum_{ N_1  \le \bra{t}^{-\frac14}} t^{-1}N_1^{2-\gamma}\ve_1^2  +\sum_{  N_1  \ge \bra{t}^{-\frac14} }\bra{t}^{-\frac74}\bra{N_1}^{-2}\ve_1^2 \\
	&\hspace{1cm} \les \bra{t}^{-\frac32 + \frac{\gamma}4}\ve_1^2. 
\end{align*}
For the case $N_2 \ll N_1$, we also get
\begin{align*}
	\sum_{N_2 \ll N \sim N_1}    \| \eqref{esti-timederi-1-2}\|_{L_\xi^2} &\les 	\sum_{\substack{N_2 \ll N \sim N_1\\ N_2 \le \bra{t}^{-\frac14}}}  t^{-1} \|m_{\textbf{N}}\|_{L_{\xi,\eta}^\infty}  \|P_{N_1}(xf_{\theo})\|_{L^2}\|\rho_{N_2}\|_{L_\eta^{\frac2{2-\gamma}}}\|f_{\thet,N_2}\|_{L_\eta^{\frac2\gamma}} \\
 	&\hspace{2cm}+ 	\sum_{\substack{N_2 \ll N \sim N_1\\ N_2 \ge \bra{t}^{-\frac14}}}t^{-1}  \bra{N_1} \|P_{N_1}(xf_{\theo})\|_{L^2}   \|\psi_{\thet,N_2}\|_{L^\infty} \\
	&\les  	\sum_{\substack{N_2 \ll N \sim N_1\\ N_2 \le \bra{t}^{-\frac14}}} t^{-1}N_2^{2-\gamma}\bra{N_1}^2\ve_1^2 + 	\sum_{\substack{N_2 \ll N \sim N_1\\ N_2 \ge \bra{t}^{-\frac14}}}\bra{t}^{-\frac74}  \bra{N_1}^{-4}\ve_1^2\\
	&\les   \bra{t}^{-\frac32 + \frac{\gamma}4} \ve_1^2.
\end{align*}
The estimates for \eqref{esti-timederi-1-3} can be obtained by following the same process used for the estimates for \eqref{esti-timederi-1-2}.

To get  the estimates for \eqref{esti-timederi-2}, \eqref{eq:esti-timederi-f} implies that
\begin{align*}
	\|\eqref{esti-timederi-2} \|_{L^2} \les N\bra{N}^{-5} \bra{t}^{-\frac32} \ve_1^2.
\end{align*}
This completes the proof of \eqref{eq:esti-timederi-bi}.
\end{proof}

\section{Proof of Theorem \ref{thm:mainthm}}\label{sec:3}

For any $\psi_{0}$ satisfying \eqref{eq:initial-condition}, it is quite standard to show the existence of local solution $\psi(t)$ in $\Sigma_{T}$ for some $T$ (for instance see \cite{choz2006-siam, lee}).
Then the solution to \eqref{eq:maineq} can be extended globally by a continuity
argument. To achieve this we have only to  prove the following: 
Given any $T > 0$, let $\psi$ be a solution with initial data satisfying \eqref{eq:initial-condition} on $[0,T]$. For a small $\varepsilon_1 > 0$,  we assume that
\[
\|\psi\|_{\Sigma_{T}}\le K\ve_{1}.
\]
Then there exists $C$ depending only on $K$ such that
\begin{align}
	\|\psi\|_{\Sigma_{T}} \le \ve_{0} + C \ve_{1}^{3}.\label{claim}
\end{align}
The proof of \eqref{claim} will be done if we prove  Propositions \ref{prop:energy-high} and \ref{prop:energy-weight}  below. Recall that  we denote $\Pi_{\theta}(D)\psi$ by $\psi_{\theta}$ for $\theta\in\{+,-\}$, so 
\begin{align*}
	\psi=\psi_+ + \psi_-.
\end{align*}

\begin{prop}\label{prop:energy-high} 
	Assume that $\psi\in C([0,T],H^s)$ satisfies
	the condition \eqref{eq:apriori}.
	Then we obtain the following estimates: For $\theta_0\in\{+,-\}$,
	\begin{align*}
	 \sup_{t\in[0,T]}\|\psi_{\thez}(t)\|_{H^{s}}\le\ve_{0}+C\ve_{1}^{3}.
	\end{align*}
\end{prop}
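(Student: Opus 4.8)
The plan is to run a standard energy estimate on the diagonalized system \eqref{maineq-decom}, exploiting only the structure that the nonlinearity is cubic with a smoothing factor $\bra{D}^{-1}$, together with the $L^\infty$-type decay furnished by Proposition \ref{prop:timedecay}. First I would apply $\bra{D}^s$ to \eqref{maineq-decom}, pair with $\bra{D}^s \psi_\theta$ in $L^2_x$, and take the real part. Since $-i\partial_t + \theta\bra{D}$ is (formally) self-adjoint, the linear part contributes nothing, and one obtains
\begin{align*}
\frac{d}{dt}\|\psi_\theta(t)\|_{H^s}^2 \les \left| \bra{ \bra{D}^s \Pi_\theta(D)\big[\bra{D}^{-1}\bra{\psi,\al^\mu\psi}\al^\nu\psi\big],\ \bra{D}^s\psi_\theta}_{L^2_x}\right|.
\end{align*}
Summing over $\theta$ and over the finitely many sign/index choices hidden in \eqref{eq:inteq0}, it suffices to bound $\|\bra{D}^{s}\big(\bra{D}^{-1}\bra{\psi,\al^\mu\psi}\al^\nu\psi\big)\|_{L^2_x}$ by $\bra{t}^{-1-}\|\psi\|_{\Sigma_T}^2\,\|\psi\|_{H^s}$ or similar; integrating in time then yields the claim, since $\ve_0$ comes from the data term and $\bra{t}^{-3/4}$ from Proposition \ref{prop:timedecay} is summable enough after one observes that two of the three factors can be placed in $W^{k,\infty}$.

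The key step is the product/commutator estimate. I would use the fractional Leibniz rule to distribute $\bra{D}^s$: the derivatives either land on $\al^\nu\psi$, giving the leading term $\|\bra{D}^{-1}\bra{\psi,\al^\mu\psi}\|_{L^\infty}\|\psi\|_{H^s}$, bounded via Proposition \ref{prop:timedecay} (with a Bernstein/Sobolev step to absorb $\bra{D}^{-1}\bra{\psi,\al^\mu\psi}$ into $W^{k,\infty}$ using the quadratic decay), or they land on the quadratic factor $\bra{\psi,\al^\mu\psi}$, in which case one factor keeps up to $s$ derivatives and is estimated in $H^s$, $\bra{D}^{-1}$ provides harmless smoothing, and the remaining two factors go into $W^{k,\infty}$ with $0\le k\le 7 \le$ (what is needed). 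Here the choice $s\ge 50$ is far more than enough so no sharpness is required; I would simply invoke $H^1\hookrightarrow L^p$ and Proposition \ref{prop:timedecay}. Each such term is $\les \bra{t}^{-3/2}\ve_1^2\|\psi\|_{H^s}$, hence integrable in $t$.

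The only mild obstacle is bookkeeping: one must check that the operator $\Pi_\theta(D)$ — a zeroth-order Fourier multiplier with bounded symbol — is bounded on $L^2_x$ and commutes with $\bra{D}^s$, so it may be discarded; and one must make sure that at most one of the three spinor factors is measured in $H^s$ while the other two are controlled in $W^{k,\infty}$ with $k\le 7$, which is compatible with \eqref{eq:timedecay}. Once these routine points are dispatched, Gr\"onwall (or direct integration, since the nonlinear term is already cubic and does not require absorbing $\|\psi\|_{H^s}$ on the right via Gr\"onwall — one simply uses $\|\psi\|_{H^s}\le K\ve_1$ from the bootstrap hypothesis) gives
\begin{align*}
\sup_{t\in[0,T]}\|\psi_{\thez}(t)\|_{H^s} \le \|\psi_{0,\thez}\|_{H^s} + C\int_0^\infty \bra{t}^{-3/2}\,dt\,\ve_1^2\cdot K\ve_1 \le \ve_0 + C\ve_1^3,
\end{align*}
which is exactly the asserted bound.
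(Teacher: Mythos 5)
Your proof is correct and follows essentially the same route as the paper: the paper goes directly through the Duhamel representation and bounds $\|\bra{D}^{-1}\bra{\psi_\thet,\al^\mu\psi_\theth}\al^\nu\psi_\theo\|_{H^s}\les\bra{s}^{-3/2}\ve_1^3$ by the Leibniz rule together with the $W^{k,\infty}$ decay of Proposition~\ref{prop:timedecay}, then integrates in $s$ — exactly the product estimate and integration you describe, with your differential energy inequality being an equivalent rephrasing of the Duhamel bound.
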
 
The proof of this proposition is in the end of this section.

\begin{prop}[Weighted estimates]\label{prop:energy-weight}
	Let $\psi\in C([0,T];H^{s})$ satisfy a priori assumption \eqref{eq:apriori}. Then we have
	\begin{align}
		\sup_{t_{1}\le t_{2}\in[0,T]}\bra{t_1}^{\de}\normo{\bra{x}\Big(f_{\theta}(t_{2})-f_{\theta}(t_{1})\Big)}_{H^5}\le\ve_{1}\label{bound-linfty}
	\end{align}
	for sufficiently small  $\de>0$ and $\theta \in \{ +,-\}$. 
\end{prop}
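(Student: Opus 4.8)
The plan is to estimate $f_\theta(t_2)-f_\theta(t_1)$ directly from the Duhamel identity \eqref{eq:duhamel-res}, writing $\wh{f_\theta}(t_2,\xi)-\wh{f_\theta}(t_1,\xi)$ as $\int_{t_1}^{t_2}(\cdots)\,ds$ of the trilinear oscillatory integral and then applying $\bra{\nabla_\xi}$ together with $\bra{D}^5$ under the integral sign, since multiplication by $\bra{x}$ corresponds (via Plancherel) to $\bra{\nabla_\xi}$ on the Fourier side. The $\xi$-derivative either (i) falls on a profile $\wh{f_{\theta_j}}$, producing $\wh{x f_{\theta_j}}$, which is controlled by the weighted part of the a priori assumption \eqref{eq:apriori} after placing the remaining two factors in $L^\infty$ via the decay of Proposition \ref{prop:timedecay}; (ii) falls on $\bra{\eta}^{-1}$ or on a frequency cutoff, which is harmless; or (iii) falls on the phase, producing the factor $s\,\nabla_\xi\phi_\Theta$ and the worst term \eqref{eq:integ-res}. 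Thus the whole estimate reduces to showing that, for each sign tuple $\Theta=(\thez,\theo,\thet,\theth)$, the $\eta,\sigma$-integral in \eqref{eq:integ-res} is $\les\bra{s}^{-1-\de}\ve_1^3$; integrating in $s\in[t_1,t_2]$ then gives $\bra{t_1}^{-\de}\ve_1^3\le\bra{t_1}^{-\de}\ve_1$, which is \eqref{bound-linfty}. All the multilinear reductions below are carried out with Lemma \ref{lem:coifmey} after dyadic localization of $|\xi|,|\xi-\eta|,|\eta+\sigma|,|\sigma|$, distributing the factor carrying the weight in $L^2$ and the others in $L^\infty$.

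I would split according to $\Theta$. Case $\thet=\theth$ and $\thez=\theo$: here $\phi_\Theta$ has no nonresonant direction, but the null structure $\nabla_\xi\phi_\Theta(\xi,\eta,\sigma)\big|_{\eta=0}=0$ yields a gain of $|\eta|$ in the multiplier; combined with the extra time decay of the bilinear factor $\bra{\psi_{\thet},\al^\mu\psi_{\theth}}$ supplied by Lemma \ref{lem:esti-l2-decay} (which exploits the $\sigma$-oscillation), this closes \eqref{eq:integ-res}. Case $\thet=\theth$ and $\thez\neq\theo$: now $|\phi_\Theta|\gtrsim1$, so I would use $e^{is\phi_\Theta}=\frac{1}{i\phi_\Theta}\partial_s e^{is\phi_\Theta}$ and integrate by parts in $s$; the boundary terms at $s=t_1,t_2$ are estimated like the undifferentiated weighted term, and the terms where $\partial_s$ falls on a profile are handled by Lemma \ref{lem:esti-timederi}, which provides exactly the extra $\bra{s}^{-1}$ decay needed to absorb the $s$ loss.

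The remaining, and hardest, case is $\thet\neq\theth$, where the $\sigma$-oscillation is absent and Lemma \ref{lem:esti-l2-decay} no longer applies. Here I would decompose into frequency regimes by the relative sizes of $|\xi|,|\xi-\eta|,|\eta+\sigma|,|\sigma|$. In each regime one of two mechanisms supplies the missing decay: either the phase is time-nonresonant and the normal-form argument in $s$ from the previous paragraph applies, or the phase is time-resonant, in which case I integrate by parts with a spatial-resonance identity, $e^{is\phi_\Theta}=\frac{-i}{s}\frac{\nabla_\eta\phi_\Theta\cdot\nabla_\eta e^{is\phi_\Theta}}{|\nabla_\eta\phi_\Theta|^2}$ or its $\sigma$-analogue. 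This creates a singularity $|\eta|^{-1}$ or $|\sigma|^{-1}$ that must be absorbed by the null structure of the multiplier $\nabla_\xi\phi_\Theta$; for most sign choices $\nabla_\xi\phi_\Theta$ vanishes on the relevant resonance set and this suffices. The exceptional configuration is $\Theta=(+,-,-,+)$, where $\nabla_\xi\phi_\Theta$ carries no such cancellation. To treat it I would use the augmented multiplier $\nabla_\xi\phi_\Theta+\nabla_\sigma\phi_\Theta$, which satisfies $(\nabla_\xi\phi_\Theta+\nabla_\sigma\phi_\Theta)\big|_{\sigma=0}=0$ and hence kills the $|\sigma|^{-1}$ singularity, and then bound the leftover piece carrying $\nabla_\sigma\phi_\Theta$ through $[\nabla_\sigma\phi_\Theta]e^{is\phi_\Theta}=s^{-1}\nabla_\sigma(e^{is\phi_\Theta})$ followed by one more integration by parts in $\sigma$.

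The \emph{main obstacle} is precisely this last configuration---the simultaneously time- and space-resonant case with $\thet\neq\theth$ and $\Theta=(+,-,-,+)$---where the naive integration by parts leaves a nonintegrable singularity and one must find a modified multiplier whose null structure is compatible with the Dirac bilinear forms while keeping the regularity loss inside the $H^5(\bra{x}^2\,dx)$ budget. Everything else is careful but routine bookkeeping: tracking the dyadic cases, bounding the various multipliers in the pointwise and Coifman--Meyer norms required by Lemma \ref{lem:coifmey}, and summing the $\bra{s}^{-1-\de}$ bounds over $s\in[t_1,t_2]$. In particular, taking $t_1=0$ in \eqref{bound-linfty} recovers the weighted part of the bootstrap estimate, and the Cauchy criterion it encodes yields the scattering statement \eqref{aim:scattering}.
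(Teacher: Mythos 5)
Your proposal reproduces the paper's proof of Proposition \ref{prop:energy-weight} essentially verbatim: the reduction on the Fourier side to the term $\mathcal I_\Theta^3$ carrying the factor $s\nabla_\xi\phi_{01}$, the use of Lemma \ref{lem:esti-l2-decay} together with the $\eta$-vanishing of $\nabla_\xi\phi_{01}$ when $\thez=\theo$, $\thet=\theth$, the normal form in time via Lemma \ref{lem:esti-timederi} when the phase is time-nonresonant, the space-resonance integrations by parts in $\eta$ or $\sigma$ with the resulting singularity absorbed by a multiplier null structure, and the decomposition of the multiplier in the fully resonant configuration with $\thet\neq\theth$, $\thez\neq\theo$, $\thez\neq\thet$ (Sections \ref{sec:res}--\ref{sec:non-res}). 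Two small imprecisions, neither of which changes the strategy: the lower bound $|\phi_\Theta|\gtrsim 1$ you invoke for $\thet=\theth$, $\thez\neq\theo$ holds for the two-frequency phase $\phi_{01}=\thez\bra{\xi}-\theo\bra{\xi-\eta}$ (which is what one uses once the inner product $\bra{\psi_\thet,\al^\mu\psi_\theth}$ is packaged as a single bilinear factor, exactly as required by Lemma \ref{lem:esti-timederi}), but it is \emph{not} true for the trilinear $\phi_\Theta$; and in the last case the correct decomposition in the variables of Section \ref{sec:non-res} is $\nabla_\xi\phi_{01}=\nabla_\xi\phi_\Theta-\nabla_\sigma\phi_\Theta$, where $\nabla_\xi\phi_\Theta$ \emph{alone} satisfies $\nabla_\xi\phi_\Theta\big|_{\sigma=0}=0$ for $\Theta=(+,-,-,+)$, rather than the combination $\nabla_\xi\phi_\Theta+\nabla_\sigma\phi_\Theta$ as stated.
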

\noindent The proof of this proposition
constitutes the main part of our analysis.  This proposition will be
proved in section \ref{sec:mainproof}.
The bound \eqref{bound-linfty} implies that 
the global solution $\psi_{\theta}=\Pi_{\theta}(D)\psi$ converges to a scattering profile $\phi_{\theta}$ defined by
\[
\phi_{\theta}(\xi):= \mathcal F^{-1}\left(\lim_{t\to\infty}f_{\theta}(t,\xi)\right).
\]
Then, \eqref{bound-linfty} leads us to
\begin{align}
	\normo{\psi_{\theta}(t)- e^{-\theta it\bra{D}}\phi_{\theta}}_{H^5(\bra{x}^2 dx)}\les\bra{t}^{-\de}\ve_1.\label{scatt-decou}
\end{align}
Setting $\phi:=\phi_{+}+\phi_{-}$, \eqref{scatt-decou} implies the
scattering results \eqref{aim:scattering}. This completes
the proof of Theorem \ref{thm:mainthm}.

\subsection{Proof of Energy estimates}\label{sec:sobolev}

By \eqref{eq:timedecay} and H\"older inequality, we estimate
\begin{align*}
	&\|\bra{D}^{-1}\left[\bra{\psi_\thet(s),\al^\mu\psi_\theth(s)}\right] \al^\nu\psi_\theo(s)\|_{H^s} \\
	&\les \|\bra{\psi_\thet(s),\al^\mu\psi_\theth(s)}\|_{H^{s
	-1}} \|\psi_\theo(s)\|_{L^\infty} + \|\bra{\psi_\thet(s),\al^\mu\psi_\theth(s)}\|_{L^\infty}\|\psi_\theo(s)\|_{H^s}\\
	&\les \bra{s}^{-\frac32}\ve_1^3,
\end{align*}
which implies that
\begin{align*}
	\|\psi_\thez(t)\|_{H^s} &\le \|\psi_0\|_{H^s}  + \wt{C}\sum_{\thej \in \{\pm\},j=1,2,3}\int_0^t 	\|\bra{D}^{-1} \left[\bra{\psi_\thet(s), \al^\mu\psi_\theth(s)} \right]\al^\nu\psi_\theo(s)\|_{H^s} ds \\
	& \le \ve_0 + C\ve_1^3.
\end{align*}
This finishes the proof of Proposition \ref{prop:energy-high}.

\section{Proof of weighted estimates}\label{sec:mainproof}
This section is devoted to prove Proposition \ref{prop:energy-weight}.  From the fact that
\begin{align*}
	\bra{x}\Big(f_{\thez}(t_{2})-f_{\thez}(t_{1})\Big) = \bra{x}\int_{t_1}^{t_2} \partial_s f_\thez(s) ds,
\end{align*}
and
\begin{align*}
	\partial_s f_\thez(s) = C \Pi_\thez(D) \sum_{\thej \in \{\pm\},j=1,2,3} \bra{D}^{-1}\left[\bra{\psi_\thet(s),\al^\mu \psi_\theth(s)} \right]\al^\nu \psi_\theo(s), 
\end{align*}
 it suffices to show that
\begin{align}\label{eq:decay-nonlinearity}
	\normo{ \int_{t_1}^{t_2}  x  \left[\Pi_{\thez}(D) \bra{D}^{-1} \left[\bra{\psi_\thet(s), \al^\mu\psi_\theth(s)}\right] \al^\nu\psi_\theo(s)  \right] ds}_{H^5} \les \bra{t_1}^{-\de}\ve_1^3.
\end{align}
Note that $\|x \phi\|_{H^5}\sim \|\bra{\xi}^5\mathcal{F}\left(x \phi \right)\|_{L_{\xi}^{2}}\sim \left\|\bra{\xi}^5\nabla_{\xi}\phi \,\right\|_{L_{\xi}^{2}}.
$ Then,  \eqref{eq:decay-nonlinearity} is equivalent to
\begin{align*}
\left\| \int_{t_1}^{t_2}  \bra{\xi}^5  \sum_{j=1}^3\mathcal I_\Theta^j(s,\xi) ds \right\|_{L_\xi^2}\les  \bra{t_1}^{-\de} \ve_1^3,
\end{align*}
for sufficiently small $\de>0$, where the integrands 
\begin{align*}
		\mathcal I_\Theta^{1}(s,\xi) & = \int_{\mathbb{R}^{2}}\nabla_\xi \left[\Pi_{\thez}(\xi)\right] \bra{\eta}^{-1}e^{is{\phi_{01}}(\xi,\eta)}\mathcal{F}(\bra{\psi_\thet(s), \al^\mu\psi_\theth(s)})(\eta)  \al^\nu\widehat{f_\theo}(s,\xi-\eta) \,d\eta ,\\
	\mathcal I_\Theta^{2}(s,\xi) & = \int_{\mathbb{R}^{2}}\Pi_{\thez}(\xi) \bra{\eta}^{-1}e^{is{\phi_{01}}(\xi,\eta)}\mathcal{F}(\bra{\psi_\thet(s), \al^\mu\psi_\theth(s)})(\eta)  \al^\nu\nabla_{\xi}\widehat{f_\theo}(s,\xi-\eta) \,d\eta ,\\
	\mathcal I_\Theta^{3}(s,\xi) & = s\int_{\mathbb{R}^{2}}\mathbf{m}(\xi,\eta)e^{is\phi_{01}(\xi,\eta)}\mathcal{F}(\bra{\psi_\thet(s), \al^\mu \psi_\theth(s)})(\eta) \al^\nu\widehat{f_\theo}(s,\xi-\eta) \,d\eta ,
\end{align*}
with the phase function $\phi_{01}$ and multiplier $\textbf{m}$,
\begin{align*}
	\begin{aligned} 
		\phi_{01}(\xi,\eta) & = \thez\langle\xi\rangle- \theo\langle\xi-\eta\rangle,\\
		\mathbf{m}(\xi,\eta) & = \Pi_{\thez}(\xi)\bra{\eta}^{-1}\nabla_{\xi}\phi_{01}(\xi,\eta)=\Pi_{\thez}(\xi) \left(\thez\frac{\xi}{\langle\xi\rangle}-\theo\frac{\xi-\eta}{\langle\xi-\eta\rangle} \right).
	\end{aligned}
\end{align*}
Here, we denoted the 4-tuple of $(\thez,\theo,\thet,\theth)$ by $\Theta$. We may treat  $\mathcal I_\Theta^1$  similarly to the energy estimates in section \ref{sec:sobolev}. For the estimate of $\mathcal I_\Theta^2$, by \eqref{eq:timedecay}, we readily have
\begin{align*}
\|\bra{\xi}^5\mathcal I_\Theta^2(s,\xi)\|_{L_\xi^2} \les  \|\bra{\psi_\thet(s),\al^\mu\psi_\theth(s)}\|_{W^{5,\infty}} \|xf_\theo (s)\|_{H^5}  \les \bra{s}^{-\frac32} \ve_1^3,
\end{align*}
which implies that
\begin{align*}
\int_{t_1}^{t_2}	\|\bra{\xi}^5\mathcal I_\Theta^2(s)\|_{L_\xi^2} \,ds \les  \bra{t_1}^{-\frac12} \ve_1^3.
\end{align*}

Let us move on to $\mathcal I_\Theta^3(s,\xi)$.   In view of Lemma \ref{lem:esti-l2-decay}, we consider the estimates for $\mathcal I_\Theta^3$ by dividing the resonant case and non-resonant case.

\subsection{Non-resonant case: $\thet = \theth$}\label{sec:res}  For the purpose of obtaining the estimates for this case, we decompose the frequencies $|\xi|$, $|\xi-\eta|$ and $|\eta|$ into dyadic numbers $N_0$, $N_1$ and $N_2$, respectively. We denote $(N_0, N_1, N_2)$ as $\textbf{N}$ in sequel. Let us now consider
\begin{align*}
	\normo{ \int_{t_1}^{t_2}\bra{\xi}^5\mathcal I_\Theta^3(s,\xi) \,ds}_{L_\xi^2} \les \sum_{\textbf{N}} \normo{ \int_{t_1}^{t_2}\mathcal I_{\Theta, \textbf{N}}^3(s,\xi) \,ds}_{L_\xi^2}, 
\end{align*}
where 
\[
\mathcal I_{\Theta, \textbf{N}}^3(s,\xi) = s\int_{\mathbb{R}^{2}}\mathbf{m}_{\textbf{N}}(\xi,\eta)e^{is\phi_{01}(\xi,\eta)}\mathcal{F}\Big( P_{N_2}\bra{\psi_\thet(s), \al^\mu \psi_\theth(s)}\Big)(\eta) \al^\nu\widehat{f_{\theo,N_1}}(s,\xi-\eta) \,d\eta 
\]
with the multiplier
\[
\mathbf{m}_{\textbf{N}}(\xi,\eta) =\Pi_{\thez}(\xi) \bra{\xi}^5\bra{\eta}^{-1} \left(\thez\frac{\xi}{\langle\xi\rangle}-\theo\frac{\xi-\eta}{\langle\xi-\eta\rangle} \right) \rho_{N_0}(\xi)\rho_{N_1}(\xi-\eta)\rho_{N_2}(\eta).
\]
In the rest of this section, we focus on to prove 
\begin{align}\label{eq:decay-fristmoment}
	\normo{	\int_{t_1}^{t_2}   \mathcal I_{\Theta,\textbf{N}}^3 (s,\xi) \,ds }_{L_\xi^2} \les \bra{t_1}^{-\de}\ve_1^3.
\end{align}
To this end, we classify the support condition into following three cases: 
 \begin{itemize}
 \item \textbf{Case (i)}: $N_0 \ll N_1 \sim N_2$.
 \item \textbf{Case (ii)}: $N_1 \ll N_0 \sim N_2$.
  \item \textbf{Case (iii)}: $N_2 \ll N_0 \sim N_1$. 
\end{itemize}
  \textbf{Estimates for Case (i)}. By H\"older inequality, we have
\begin{align*}
	&\int_{t_1}^{t_2} \sum_{\substack{\textbf{Case (i)} \\ N_0 \le \bra{s}^{-3}}}s  \bra{N_0}^{5}\bra{N_2}^{-1} \normo{\rho_{N_0}}_{L_\xi^2} \|P_{N_2}\bra{\psi_\thet(s),  \al^\mu\psi_\theth(s)}\|_{L^2}  \bra{N_1}^{-s}\|\psi_\theo\|_{H^s} ds\\
	&\qquad\les  \int_{t_1}^{t_2} \sum_{\substack{\textbf{Case (i)} \\ N_0 \le \bra{s}^{-3}}} s N_0 \bra{N_1}^{-s-1}\prod_{j=1}^3\|\psi_{\thej}\|_{H^s} \,ds \les \bra{t_1}^{-1}\ve_1^3.
\end{align*}
Using \eqref{eq:esti-l2-decay} with $0<\gamma=\de \ll 1$ and Lemma \ref{lem:coifmey} with the estimate
\begin{align}\label{eq:multi-case-1}
	\|\textbf{m}_{\textbf{N}}\|_{\rm CM} \les \bra{N_0}^5 \frac{N_1}{\bra{N_1}^{2}},	
\end{align}
we see that
\begin{align*}
	&\int_{t_1}^{t_2} \sum_{\substack{\textbf{Case (i)}\\ N_0 \ge \bra{s}^{-3}}}s  \bra{N_0}^{5} \frac{N_1}{\bra{N_1}^{2}}\|P_{N_2}\bra{\psi_\thet(s), \al^\mu \psi_\theth(s)}\|_{L^2}  \bra{N_1}^{-7}\|\psi_\theo\|_{W^{7,\infty}} \,ds\\
	&\qquad \les  \int_{t_1}^{t_2}  \sum_{\substack{\textbf{Case (i)}\\ N_0 \ge \bra{s}^{-3}}} \bra{s}^{-\frac32 +\de}  \bra{N_2}^{-4}\ve_1^3 \,ds \les \bra{t_1}^{-\frac12 +2\de}\ve_1^3.
\end{align*}
Note that \eqref{eq:multi-case-1} is independent of the sign relation. Then this completes the proof of  \eqref{eq:decay-fristmoment} for \textbf{Case (i)}.

\textbf{Estimates for Case (ii)}: $N_1 \ll N_0 \sim N_2$. In this case, we estimate by dividing further into $N_0 \sim N_2 \ge \bra{s}^{\frac3s}$ and $N_0 \sim N_2 \le \bra{s}^{\frac3s}$ to handle $\bra{N_0}^k$. Indeed, by Lemma \ref{lem:coifmey}, Young's inequality  and Sobolev embedding $H^1 \hookrightarrow L^{p}(2\le p <\infty)$, we have
\begin{align*}
	& \int_{t_1}^{t_2} \sum_{\textbf{Case (ii)}} \normo{\mathcal I_{\Theta, \textbf{N}}^3(s,\xi)}_{L_\xi^2} ds \\
	&\les
	\int_{t_1}^{t_2} \sum_{\substack{\textbf{Case (ii)}\\ N_0 \ge \bra{s}^{\frac3s}}} s \bra{N_0}^{3}N_0 \normo{P_{N_2} \bra{\psi_\thet(s),\al^\mu \psi_{\theth}(s)}}_{L^\infty} \|\psi_{\theo,N_1}\|_{L^2} ds\\
	&\les 	\int_{t_1}^{t_2}	\sum_{\substack{\textbf{Case (ii)}\\ N_0 \ge \bra{s}^{\frac3s}}} s \bra{N_0}^{4-s} \|\psi_\thet\|_{H^s} \|\psi_\theth\|_{H^s}  N_1^{\de}  \|\bra{x} f_{\theo}\|_{L^2} ds \les  \bra{t_1}^{-1}\ve_1^3.
\end{align*}
 On the other hand, Lemma \ref{lem:coifmey}, Bernstein's inequality, and \eqref{eq:esti-l2-decay}  lead us that
\begin{align*}
	&\int_{t_1}^{t_2}\sum_{\substack{\textbf{Case (ii)}\\ N_0 \le \bra{s}^{\frac3s}}}\normo{\mathcal I_{\Theta, \textbf{N}}^3(s,\xi)}_{L_\xi^2}  ds \\
	&\les
	\int_{t_1}^{t_2} \sum_{\substack{\textbf{Case (ii)}\\ N_0 \le \bra{s}^{\frac3s}}} s \bra{N_0}^{3}N_0 \normo{P_{N_2} \bra{\psi_\thet(s),\al^\mu \psi_{\theth}(s)}}_{L^2} \|\psi_{\theo,N_1}\|_{L^\infty} ds\\
	&\les 	\int_{t_1}^{t_2}	\sum_{\substack{\textbf{Case (ii)}\\ N_0 \le \bra{s}^{\frac3s}}} \bra{s}^{-\frac32 + \de} \bra{N_0}^{3} N_1^{\de} \ve_1^3 ds \les  \bra{t_1}^{-\frac12 +2\de}\ve_1^3.
\end{align*}
These finish the proof of \eqref{eq:decay-fristmoment} for \textbf{Case (ii)}.

\textbf{Estimates for Case (iii):} $N_2 \ll N_0 \sim N_1$.   For this case, we also further decompose into $N_2 \le \bra{s}^{-1}$ and $N_2 \ge \bra{s}^{-1}$.  For the first case $N_2 \le \bra{s}^{-1}$, by H\"older inequality, we have
\begin{align*}
	\int_{t_1}^{t_2} \sum_{\substack{\textbf{Case (iii)}\\ N_2 \le \bra{s}^{-1} }}\normo{\bra{\xi}^5\mathcal I_{\Theta, \textbf{N}}^3(s,\xi)}_{L_\xi^2} ds &\les 	\int_{t_1}^{t_2} \sum_{\substack{\textbf{Case (iii)}\\ N_2 \le \bra{s}^{-1} }} s\bra{N_0}^5N_2 \|\rho_{N_2}\|_{L_\eta^1} \bra{N_1}^{-s}\prod_{j=1}^3 \|\psi_\thej\|_{H^s} ds   \\
	&\les  \bra{t_1}^{-2+\de}\ve_1^3.
\end{align*}
Then, we have \eqref{eq:decay-fristmoment} for $N_2 \le \bra{s}^{-1}$. We now shall need to estimate for $N_2 \ge \bra{s}^{-1}$. In order to obtain the desired bound for $N_2 \ge \bra{s}^{-1}$, we describe a null structure with respect to the sign relation between $\thez$ and $\theo$.
\begin{rem}\label{rem:theta01}
We will exploit Lemma \ref{lem:esti-l2-decay} to estimate $\mathcal I_{\Theta,\textbf{N}}^3$ for the case $N_2 \ge \bra{s}^{-1}$. Lemma \ref{lem:esti-l2-decay} possesses the extra time decay and it causes the singularity of $N_2$ simultaneously. For this reason, we shall need some structure, which can kill the singularity. If $\thez =\theo$, $\nabla_\xi \phi_{01}$ in our multiplier $\textbf{m}_\textbf{N}$ implies the $\eta$-smallness that plays a role of eliminating the singularity. Otherwise, if $\thez \neq \theo$, there is no such null structure in $\textbf{m}_\textbf{N}$. Fortunately, since the phase $\phi_{01}$ has time nonresonance, we can utilize the normal form approach to obtain the extra time decay. Therefore, it is natural to divide the case $\thez = \theo$ and $\thez \neq \theo$.
\end{rem}
As we mentioned above remark, if $\thez = \theo$, $\nabla_\xi \phi_{01}$ leads us that
\begin{align*}
	\|\textbf{m}_{\textbf{N}} \|_{\rm CM} \les \frac{N_2}{\bra{N_0}}.
\end{align*}
Then, by Lemma \ref{lem:coifmey} and \eqref{eq:esti-l2-decay}, we get
\begin{align*}
	&\int_{t_1}^{t_2}\sum_{\substack{\textbf{Case (iii)}\\ N_2 \ge \bra{s}^{-1} }}	\normo{\bra{\xi}^5\mathcal I_{\Theta, \textbf{N}}^3(s,\xi)}_{L_\xi^2} ds\\ 
	&\les \int_{t_1}^{t_2} \sum_{\substack{\textbf{Case (iii)}\\ N_2 \ge \bra{s}^{-1} }} s\bra{N_0}^4N_2 \| P_{N_2}\bra{\psi_\thet(s),  \al^\mu\psi_\theth(s)}\|_{L^2} \bra{N_1}^{-k}\|\psi_{\theo}(s)\|_{W^{k,\infty}} ds\\
	& \les \int_{t_1}^{t_2}  \sum_{\bra{s}^{-1} \le N_2 \ll N_0} \bra{s}^{-\frac32+\de} \bra{N_0}^{-3} \ve_1^3  ds \les \bra{t_1}^{-\frac12+2\de}\ve_1^3.
\end{align*}

If $\thez \neq \theo$, we  have $|\nabla_\xi \phi_{01}| \sim 1$, i.e., there is no null structure to handle the singularity $N_2$. As we mentioned in Remark \ref{rem:theta01}, we do not use any null structure, and instead, we use the normal form approach by exploiting the time nonresonance
\begin{align*}
|\phi_{01}| = |\bra{\xi} + \bra{\xi-\eta}| \sim \bra{N_0}.
\end{align*}
Using a relation
\[
e^{is\phi_{01}} = \frac{\partial_s e^{is\phi_{01}}}{i\phi_{01}}
\]
in $\mathcal I_{\Theta,\textbf{N}}^3$, we see that integration by parts in time yields the following terms:
\begin{subequations}
	\begin{align}
		&t_2\int_{\mathbb{R}^{2}} \wt{\textbf{m}_{\textbf{N}}} (\xi,\eta)e^{it_2\phi_{01}(\xi,\eta)}\mathcal{F}(P_{N_2}\bra{\psi_\thet(t_2), \al^\mu \psi_\theth(t_2)})(\eta) \al^\nu\widehat{f_{\theo,N_1}}(t_2,\xi-\eta) \,d\eta , \label{eq:normal-t2}\\
		&t_1\int_{\mathbb{R}^{2}}\wt{\textbf{m}_{\textbf{N}}} (\xi,\eta)e^{it_1\phi_{01}(\xi,\eta)}\mathcal{F}(P_{N_2}\bra{\psi_\thet(t_1), \al^\mu \psi_\theth(t_1)})(\eta) \al^\nu\widehat{f_{\theo,N_1}}(t_1,\xi-\eta) \,d\eta , \label{eq:normal-t1}\\
		&\int_{t_1}^{t_2}  \int_{\mathbb{R}^{2}}\wt{\textbf{m}_{\textbf{N}}} (\xi,\eta)e^{is\phi_{01}(\xi,\eta)}\left[\mathcal{F}(P_{N_2}\bra{\psi_\thet(s), \al^\mu \psi_\theth(s)})(\eta)\right] \al^\nu\widehat{  f_{\theo,N_1}}(s,\xi-\eta) \,d\eta  ds, \label{eq:normal-s}\\
		&\int_{t_1}^{t_2} s \int_{\mathbb{R}^{2}}\wt{\textbf{m}_{\textbf{N}}} (\xi,\eta)e^{is\phi_{01}(\xi,\eta)}\partial_s\left[\mathcal{F}(P_{N_2}\bra{\psi_\thet(s), \al^\mu \psi_\theth(s)})(\eta)\right] \al^\nu\widehat{  f_{\theo,N_1}}(s,\xi-\eta) \,d\eta  ds, \label{eq:normal-f23}\\
		&\int_{t_1}^{t_2} s \int_{\mathbb{R}^{2}}\wt{\textbf{m}_{\textbf{N}}} (\xi,\eta)e^{is\phi_{01}(\xi,\eta)}\mathcal{F}(P_{N_2}\bra{\psi_\thet(s), \al^\mu \psi_\theth(s)})(\eta) \al^\nu\widehat{ \partial_s f_{\theo,N_1}}(s,\xi-\eta) \,d\eta  ds, \label{eq:normal-f1}
	\end{align}
\end{subequations}
where
\[
\wt{\textbf{m}_{\textbf{N}}} (\xi,\eta) := \frac{\mathbf{m}_{\textbf{N}}(\xi,\eta)}{\phi_{01}(\xi,\eta)}.
\]
Then we get
\begin{align}\label{eq:multiplier-tilde}
	\|\wt{\textbf{m}_\textbf{N}}\|_{\rm CM} \les \bra{N_0}^3.
\end{align}
To bound  \eqref{eq:normal-t2}, we use Lemma \ref{lem:coifmey} with \eqref{eq:multiplier-tilde} to obtain
\begin{align*}
	\sum_{\substack{\textbf{Case (iii)}\\ N_2 \ge \bra{t_2}^{-1} }}\normo{\eqref{eq:normal-t2}}_{L_\xi^2} &\les \sum_{\substack{\textbf{Case (iii)}\\ N_2 \ge \bra{t_2}^{-1} }} t_2 \bra{N_0}^3 \|P_{N_2}\bra{\psi_\thet(t_2), \al^\mu \psi_\theth(t_2}\|_{L^\infty} \bra{N_1}^{-s} \|\psi_{\theo}\|_{H^s}\\
	&\les \bra{t_2}^{-\frac12 + \de} \ve_1^3 \les \bra{t_1}^{-\de} \ve_1^3. 
\end{align*}
For the estimates of \eqref{eq:normal-t1} and \eqref{eq:normal-s}, we may have the desired results in a similar way to the estimates for $\eqref{eq:normal-t2}$. To estimate \eqref{eq:normal-f23} and \eqref{eq:normal-f1}, we utilize Lemma \ref{lem:esti-timederi}. Indeed, using \eqref{eq:esti-timederi-bi} and Lemma \ref{lem:coifmey} with \eqref{eq:multiplier-tilde}, we see that
\begin{align*}
	\sum_{\substack{\textbf{Case (iii)}\\ N_2 \ge \bra{s}^{-1} }}\normo{\eqref{eq:normal-f23}}_{L_\xi^2} &\les  \int_{t_1}^{t_2}s \sum_{\substack{\textbf{Case (iii)}\\ N_2 \ge \bra{s}^{-1} }} \bra{N_0}^3 \|\partial_s P_{N_2}\bra{\psi_\thet(s), \al^\mu \psi_\thet(s)}\|_{L^2} \|\psi_{\theo,N_1}(s)\|_{L^\infty}   \,ds \\
	&\les  \int_{t_1}^{t_2} s \sum_{\substack{\textbf{Case (iii)}\\ N_2 \ge \bra{s}^{-1} }} \bra{N_0}^3   \bra{s}^{-\frac32 + \frac\de4}\bra{N_1}^{-7}\bra{s}^{-\frac34} \ve_1^3  \,ds \les \bra{t_1}^{-\frac14+2\de} \ve_1^3. 
\end{align*}
The estimate for \eqref{eq:normal-f1} can be treated in a similar way to above with \eqref{eq:esti-timederi-f} and we finish the proof of \eqref{eq:decay-fristmoment} for \textbf{Case (iii)}.

\subsection{Resonant case: $\thet \neq \theth$}\label{sec:non-res}  For the purpose of obtaining estimates for $\mathcal I_\Theta^3(s,\xi)$, we further take Fourier transform and the suitable change of variables. Then we get
\begin{align*}
	\mathcal I_{\Theta}^3(s,\xi) &= s\iint_{\mathbb{R}^{2}\times \R^2}\mathbf{m}(\xi,-\eta)e^{is\phi_\Theta(\xi,\eta,\sigma)} \bra{\wh{f_{\thet}}(s,\xi+\eta+\sigma), \al^\mu \wh{f_{\theth}}(s,\xi +\sigma)} \al^\nu\widehat{f_{\theo}}(s,\xi+\eta) \,d\sigma d\eta, 	 	
\end{align*}
where
\begin{align*}
	\phi_\Theta (\xi,\eta,\sigma) = \thez \bra{\xi} -\theo\bra{\xi+\eta} +\thet\Big(\bra{\xi+\eta+\sigma} +\bra{\xi +\sigma}\Big)    \;\;\mbox{ for } \; \Theta= (\thez,\theo,\thet,-\thet)
\end{align*}
and decompose $|\xi|$, $|\xi+\eta|$, $|\xi+\eta+\sigma|$ and $|\xi+\sigma|$ into  dyadic numbers $N_0,N_1,N_2$ and $N_3$, respectively. Moreover, we  decompose $|\eta|$, $|\sigma|$ into $L_1$, $L_2$ respectively. Let us denote $(N_0,N_1,N_2)$ and $(L_1,L_2)$ as $\textbf{N}$ and $\textbf{L}$. Note the difference between the frequency supports in section \ref{sec:res}. Then we see that
\begin{align*}
	\normo{ \int_{t_1}^{t_2}\mathcal I_\Theta^3(s,\xi) \,ds}_{L_\xi^2} = \normo{ \int_{t_1}^{t_2} \sum_{\textbf{N},\textbf{L}} \mathcal I_{\Theta, \textbf{N},\textbf{L}}^3(s,\xi) \,ds}_{L_\xi^2}, 
\end{align*}
where 
\begin{align*}
\mathcal I_{\Theta, \textbf{N},\textbf{L}}^3(s,\xi) &= s\iint_{\mathbb{R}^{2}\times \R^2}\mathbf{m}_{\textbf{N},\textbf{L}}(\xi,\eta)e^{is\phi_\Theta(\xi,\eta,\sigma)} \bra{\wh{f_{\thet,N_2}}(s,\xi+\eta+\sigma), \al^\mu \wh{f_{-\thet,N_3}}(s,\xi +\sigma)}\\
&\hspace{8.5cm}\times  \al^\nu\widehat{f_{\theo,N_1}}(s,\xi+\eta) \,d\sigma d\eta,
\end{align*}
with the multiplier
\[
\mathbf{m}_{\textbf{N},\textbf{L}}(\xi,\eta) =\Pi_{\thez}(\xi) \bra{\xi}^5 \bra{\eta}^{-1} \left(\thez\frac{\xi}{\langle\xi\rangle}-\theo\frac{\xi+\eta}{\langle\xi+\eta\rangle} \right) \rho_{\textbf{N},\textbf{L}}(\xi,\eta,\sigma).
\]
Here $\rho_{\textbf{N},\textbf{L}}(\xi,\eta,\sigma)$ denotes a bundle of cut-off functions with respect to $N_\mu$'s and $L_j$'s. Then we devote to show that 
\begin{align}\label{eq:decay-non-res}
	\normo{	\int_{t_1}^{t_2} \sum_{\textbf{N},\textbf{L}}   \mathcal I_{\Theta,\textbf{N},\textbf{L}}^3 (s,\xi) \,ds }_{L_\xi^2} \les \bra{t_1}^{-\de}\ve_1^3.
\end{align}

As we obtained the similar result to \eqref{eq:decay-non-res} in section \ref{sec:res}, we need to estimate  \eqref{eq:decay-non-res} by decomposing the support condition cases. Let us first consider two cases $L_1 \ll N_2 \sim N_3$ and $N_2 \nsim N_3$. The significant difficulty, compared to section \ref{sec:res} is that we cannot use the non-resonant estimates \eqref{eq:esti-l2-decay} in Lemma \ref{lem:esti-l2-decay} and \eqref{eq:esti-timederi-bi} in Lemma \ref{lem:esti-timederi} from the fact that these estimates were only proved when signs of the two spinors $\psi_\thet$ and $\psi_\theth$ are the same. However, we can verify the same results for the cases $L_1 \ll N_2 \sim N_3$ or $N_2 \nsim N_3$, since one obtains same lower bounds in \eqref{eq:space-res-1} and \eqref{eq:space-res-2},  which plays  a crucial role in the proof of \eqref{eq:esti-l2-decay} and \eqref{eq:esti-timederi-bi}.  Indeed, we have following lemma.
\begin{lemma}
	Let $\thet \neq \theth$ and $L_1 \ll N_2 \sim N_3$ or $N_2 \nsim N_3$.	Assume that $\psi$ satisfies a priori assumption \eqref{eq:apriori}. Then we have
		\begin{align}
				&\|P_{L_1} \bra{\psi_{\thet,N_2}(s),\al^\mu\psi_{\theth,N_3}(s)}\|_{L_x^2} \les L_1^{-1} s^{-\frac74}\ve_1^2,\label{eq:4-l2-decay}\\
			& \|P_{L_1} \partial_s \bra{\psi_{\thet,N_2}(s), \al^\mu \psi_{\theth,N_3}(s)}\|_{L^2} \les  \bra{s}^{-\frac32+\frac{\gamma}4} \ve_1^2 \label{eq:4-time-deri}
		\end{align}
		for sufficiently small $0 < \gamma \ll 1$.
\end{lemma}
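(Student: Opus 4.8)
The plan is to run the proofs of Lemma~\ref{lem:esti-l2-decay} and Lemma~\ref{lem:esti-timederi} essentially verbatim; the only genuinely new ingredient is a substitute for the phase gradient bounds \eqref{eq:space-res-1} and \eqref{eq:space-res-2} in the opposite sign situation. Writing $\theth=-\thet$ and passing to Fourier variables, one has
\[
\mathcal F\big[P_{L_1}\bra{\psi_{\thet,N_2}(s),\al^\mu\psi_{\theth,N_3}(s)}\big](\xi)=\rho_{L_1}(\xi)\int_{\R^2}e^{is\vp(\xi,\eta)}\bra{\wh{f_{\thet,N_2}}(s,\eta),\al^\mu\wh{f_{\theth,N_3}}(s,\xi+\eta)}\,d\eta ,
\]
where now $\vp(\xi,\eta)=\thet\big(\bra{\eta}+\bra{\xi+\eta}\big)$, so that $\nabla_\eta\vp(\xi,\eta)=\thet\big(\frac{\eta}{\bra{\eta}}+\frac{\xi+\eta}{\bra{\xi+\eta}}\big)$, and $|\eta|\sim N_2$, $|\xi+\eta|\sim N_3$, $|\xi|\sim L_1$, with either $L_1\ll N_2\sim N_3$ or $N_2\nsim N_3$ (in the latter case automatically $L_1\sim\max(N_2,N_3)$).

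First I would establish nonresonance of the phase in $\eta$. Since $r\mapsto r/\bra{r}$ is strictly increasing, $\nabla_\eta\vp=0$ forces both $|\eta|=|\xi+\eta|$ and $\eta=-(\xi+\eta)$, i.e.\ $\xi=-2\eta$ and $L_1\sim N_2\sim N_3$, which is exactly the range we have excluded; hence $\nabla_\eta\vp$ does not vanish in the relevant regimes. Quantitatively, the reverse triangle inequality applied to $\nabla_\eta\vp$, together with the monotonicity just used, provides in these regimes a lower bound for $|\nabla_\eta\vp(\xi,\eta)|$ of exactly the same dyadic shape as the right hand sides of \eqref{eq:space-res-1} and \eqref{eq:space-res-2} --- with $L_1$ now playing the role of the output frequency there, and with the stronger estimate $|\nabla_\eta\vp|\sim\frac{N_2}{\bra{N_2}}$ available when $L_1\ll N_2\sim N_3$. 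This is the one new computation and I expect it to be the main obstacle: one must check that the cancellation structure of $\frac{\eta}{\bra{\eta}}+\frac{\xi+\eta}{\bra{\xi+\eta}}$, away from the excluded diagonal $L_1\sim N_2\sim N_3$, is quantitatively just as good as that of $\frac{\eta}{\bra{\eta}}-\frac{\xi+\eta}{\bra{\xi+\eta}}$ was in the like sign case.

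Granting this, \eqref{eq:4-l2-decay} follows as \eqref{eq:esti-l2-decay}: inserting $e^{is\vp}=-\frac{i}{s}\frac{\nabla_\eta\vp\cdot\nabla_\eta e^{is\vp}}{|\nabla_\eta\vp|^2}$ and integrating by parts in $\eta$ produces the gain $s^{-1}$ and three pieces in which $\nabla_\eta$ falls on the symbol $m(\xi,\eta)=\frac{\nabla_\eta\vp}{|\nabla_\eta\vp|^2}\rho_{L_1}(\xi)\rho_{N_2}(\eta)\rho_{N_3}(\xi+\eta)$, on $f_\thet$ (producing $xf_\thet$), or on $f_\theth$; the Coifman--Meyer norms $\|\nabla_\eta^\ell m\|_{\rm CM}$ are controlled by the same dyadic factors as before, modulo harmless powers of $\bra{N_2}$ absorbed by the $H^s$ norm, so Lemma~\ref{lem:coifmey} and Proposition~\ref{prop:timedecay} reproduce $L_1^{-1}s^{-7/4}\ve_1^2$ after summation over $N_2,N_3$. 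For \eqref{eq:4-time-deri} I would follow the proof of \eqref{eq:esti-timederi-bi}: if $\partial_s$ hits a spinor we invoke \eqref{eq:esti-timederi-f}, while if $\partial_s$ hits the phase we pick up $\partial_s e^{is\vp}=i\vp\,e^{is\vp}$, i.e.\ a factor $\vp(\xi,\eta)=\thet(\bra{\eta}+\bra{\xi+\eta})$ which now has size $\bra{N_2}$ rather than $\les L_1$ --- this is the only structural change from Lemma~\ref{lem:esti-timederi}. After the normal form transform the resulting symbol $\frac{\vp\,\nabla_\eta\vp}{|\nabla_\eta\vp|^2}\rho_{L_1}(\xi)\rho_{N_2}(\eta)\rho_{N_3}(\xi+\eta)$ carries one extra power of $\bra{N_2}$ relative to \eqref{esti-timederi-multi}, which is paid for by $\|\psi_\thet\|_{H^s}$ since $s\ge50$, so the final decay $\bra{s}^{-3/2+\gamma/4}\ve_1^2$ is unchanged. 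Apart from the phase gradient bound above, the only other point needing care is this bookkeeping of the $\bra{N_2}$ losses in the time derivative estimate; the rest is a line by line transcription of the proofs of Lemmas~\ref{lem:esti-l2-decay} and~\ref{lem:esti-timederi}.
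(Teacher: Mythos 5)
Your proposal follows exactly the route the paper indicates in the remark after the statement: identify the new phase $\vp=\thet(\bra{\eta}+\bra{\xi+\eta})$, check that $\nabla_\eta\vp$ does not vanish off the excluded diagonal $L_1\sim N_2\sim N_3$ (your monotonicity/reverse-triangle observation is correct, and the stronger bound $|\nabla_\eta\vp|\gtrsim N_2/\bra{N_2}$ for $L_1\ll N_2\sim N_3$ is right), and then transcribe the proofs of Lemmas~\ref{lem:esti-l2-decay} and~\ref{lem:esti-timederi}. For \eqref{eq:4-l2-decay} this is indeed essentially verbatim.

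For \eqref{eq:4-time-deri}, however, the accounting you give for the phase-magnitude change is not correct when $L_1\ll N_2\sim N_3\lesssim 1$. You rightly note that $|\vp|\sim\bra{N_2}$ rather than $\les |\xi|$, but "one extra power of $\bra{N_2}$, paid for by $H^s$" does not capture what happens at low frequency: for $N_2\les 1$ one has $|\vp|\sim 1$ (it never degenerates, since $\bra{\eta}+\bra{\xi+\eta}\ge 2$) while $|\nabla_\eta\vp|\sim N_2$, so the normal-form symbol $\vp\,\nabla_\eta\vp/|\nabla_\eta\vp|^2$ is of size $N_2^{-1}$ and its $\eta$-gradient of size $N_2^{-2}$. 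Compared with \eqref{esti-timederi-multi} (which at small frequency is $\les N_1^{-1}$, thanks to the cancellation $|\vp_\theta|\les|\xi|\,|\eta|/\bra{\eta}$ in the same-sign phase), the opposite-sign symbol is worse by a factor $N_2^{-1}$. That loss lives at $N_2\ll 1$, where $\|\psi_\thet\|_{H^s}$ provides no help, and a line-by-line transcription of the low-frequency step in Lemma~\ref{lem:esti-timederi} returns $t^{-1}N_2^{1-\gamma}$ per dyadic piece, which for $\bra{s}^{-1/2}\les N_2\les\bra{s}^{-1/4}$ is strictly weaker than the stated $\bra{s}^{-3/2+\gamma/4}$.

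The statement is still true, but this regime needs a different (and in fact much simpler) argument rather than the normal form. Since $L_1\ll N_2\les 1$, one has $\|\rho_{L_1}\|_{L^2_\xi}\sim L_1\les 1$, and one can use H\"older together with Proposition~\ref{prop:timedecay} directly: writing $\partial_s\psi_{\theta,N}=-i\theta\bra{D}\psi_{\theta,N}+e^{-\theta is\bra{D}}\partial_sP_Nf_\theta$ and using \eqref{eq:esti-timederi-f} for the second term, one gets
\begin{align*}
\|P_{L_1}\partial_s\bra{\psi_{\thet,N_2},\al^\mu\psi_{\theth,N_3}}\|_{L^2}
&\les L_1\,\|\partial_s\bra{\psi_{\thet,N_2},\al^\mu\psi_{\theth,N_3}}\|_{L^\infty}\\
&\les L_1\bra{N_2}\,\|\psi_{\thet,N_2}\|_{W^{1,\infty}}\|\psi_{\theth,N_3}\|_{L^\infty}+\cdots\les\bra{s}^{-\frac32}\ve_1^2,
\end{align*}
which is even better than needed. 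The normal-form transcription you describe then only has to be carried out for $N_2\gtrsim 1$, where the $\bra{N_2}$-type losses genuinely are absorbed by the $H^s$ norm as you claim. (The paper's own justification is a one-line remark that only mentions the lower bounds on $\nabla_\eta\vp$ and is silent on the change in $|\vp|$, so your write-up is already more attentive than the published one; but the specific claim "one extra power of $\bra{N_2}$," taken literally, is false at low frequency and the proof needs the case split above.)
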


\begin{rem}
	The estimates \eqref{eq:4-l2-decay} and \eqref{eq:4-time-deri} correspond to \eqref{eq:esti-l2-decay} and \eqref{eq:esti-timederi-bi}, respectively. In the proofs of \eqref{eq:esti-l2-decay} and \eqref{eq:esti-timederi-bi}, it is necessary to obtain the lower bounds of the resonance functions as \eqref{eq:space-res-1} and \eqref{eq:space-res-2}. However, since we have the same lower bounds for the cases $L_1 \ll N_2 \sim N_3$ and $N_2 \nsim N_3$,  one can obtain the proofs of \eqref{eq:4-l2-decay} and \eqref{eq:4-time-deri} in a similar way, by following the proofs of \eqref{eq:esti-l2-decay} and \eqref{eq:esti-timederi-bi}. Thus, we omit the details.
\end{rem}

Once we assume the validity of above lemma, this lemma enables us to obtain \eqref{eq:decay-non-res} for cases $L_1 \ll N_2 \sim N_3$ and $N_2 \nsim N_3$ in a similar way to section \ref{sec:res}, straightforwardly. Then, it remains only to deal with the support conditions 
\begin{align}\label{eq:condi-support}
	L_1 \sim N_2 \sim N_3.
\end{align}
To this end, by the support condition, we may consider the four cases:
\begin{itemize}
 \item \textbf{Case (i)}: $N_0 \ll N_1 \sim L_1$.
 \item \textbf{Case (ii)}: $N_1 \ll N_0 \sim L_1$.
  \item \textbf{Case (iii)}: $L_1 \ll N_0 \sim N_1$.
  \item \textbf{Case (iv)}: $N_0 \sim N_1 \sim L_1$.
\end{itemize}
\subsubsection{Estimates for \textbf{Case (i): $N_0 \ll N_1 \sim L_1$}.}\label{sec:subsub1} Note that the frequency relation between $\xi$, $\xi +\sigma$, and $\sigma$ implies that $N_0 \ll N_3 \sim L_2$. Then, due to the constraint \eqref{eq:condi-support}, we have 
$$
N_0 \ll N_1 \sim N_2 \sim N_3 \sim L_1 \sim L_2.
$$
By H\"older inequality, we readily get
\begin{align*}
		& \sum_{\substack{\textbf{Case (i)} \\ N_0 \le \bra{s}^{-2}}}s  \bra{N_0}^5 \bra{N_2}^{-1} \normo{\rho_{N_0}}_{L_\xi^2} \|P_{N_2}\bra{\psi_\thet(s),  \al^\mu\psi_{\theth}(s)}\|_{L^2}  \bra{N_1}^{-s}\|\psi_\theo\|_{H^s}\\
	&\qquad\les  \sum_{\substack{\textbf{Case (i)} \\ N_0 \le \bra{s}^{-2}}} s N_0 \bra{N_1}^{-s}\|\psi_{\theth}\|_{L^\infty}\|\psi_{\thet}\|_{H^s} \|\psi_{\theo}\|_{H^s}\les \bra{s}^{-\frac74}\ve_1^3,
\end{align*}
which leads us that
\begin{align*}
	\int_{t_1}^{t_2} \sum_{\substack{\textbf{Case (i)}\\ N_0 \le \bra{s}^{-2}}}  \normo{ \mathcal I_{\Theta,\textbf{N},\textbf{L}}^3}_{L_\xi^2} \,ds \les \bra{t_1}^{-\frac34}\ve_1^3.
\end{align*}
For $N_0 \ge \bra{s}^{-2}$, we estimate by dividing the sign relation into $\theo = \thet$ and $\theo \neq \thet$. If $\theo = \thet$, we extract an extra time decay by using the space resonance. Precisely, simple calculation gives that
\begin{align*}
	\left|\nabla_\eta \phi_\Theta(\xi,\eta,\sigma) \right|=  \left| \frac{\xi +\eta}{ \bra{\xi+\eta}} - \frac{\xi+\eta+\sigma}{\bra{\xi+\eta+\sigma}} \right| \gtrsim \frac{L_2}{ \bra{N_1}^3}.
\end{align*}
By the integration by parts in $\eta$, $\mathcal I_{\Theta,\textbf{N},\textbf{L}}^3$ is bounded by 
\begin{subequations}
	\begin{align}
		&\begin{aligned}
		&\iint_{\mathbb{R}^{2}\times \R^2}\nabla_\eta \wt{\mathbf{m}_{\textbf{N},\textbf{L}}}(\xi,\eta,\sigma)e^{is\phi_\Theta(\xi,\eta,\sigma)} \bra{\wh{f_{\thet,N_2}}(s,\xi+\eta+\sigma), \al^\mu \wh{f_{\theth,N_3}}(s,\xi +\sigma)} \\
	&\hspace{8cm} \times \al^\nu\widehat{f_{\theo,N_1}}(s,\xi+\eta) \,d\sigma d\eta, \end{aligned}\label{eq:non-space-1}\\
&\begin{aligned}
&\iint_{\mathbb{R}^{2}\times \R^2}\wt{\mathbf{m}_{\textbf{N},\textbf{L}}}(\xi,\eta,\sigma)e^{is\phi_\Theta(\xi,\eta,\sigma)} \bra{ \wh{f_{\thet,N_2}}(s,\xi+\eta+\sigma), \al^\mu \wh{f_{\theth,N_3}}(s,\xi +\sigma)} \\
&\hspace{8cm} \times \al^\nu  \nabla_\eta \widehat{f_{\theo,N_1}}(s,\xi+\eta) \,d\sigma d\eta, \end{aligned}\label{eq:non-space-2}
	\end{align}
\end{subequations}
plus similar term, where
\begin{align}\label{eq:1-non-multiplier}
\wt{\mathbf{m}_{\textbf{N},\textbf{L}}}(\xi,\eta,\sigma) = 	\frac{\nabla_\eta \phi_\Theta(\xi,\eta,\sigma)\mathbf{m}_{\textbf{N},\textbf{L}}(\xi,\eta)}{|\nabla_\eta \phi_\Theta(\xi,\eta,\sigma)|^2}.
\end{align}
For $\bra{s}^{-2} \le N_0 \ll N_1 \sim N_2 \sim N_3 \sim L_1 \sim L_2$, Lemma \ref{lem:coifmey} yields that
\begin{align*}
	\int_{t_1}^{t_2} \sum_{\substack{\textbf{Case (i)} \\ N_0 \ge \bra{s}^{-2}}} \|\eqref{eq:non-space-1}\|_{L_\xi^2} ds	&\les \int_{t_1}^{t_2}  \sum_{\substack{\textbf{Case (i)} \\ N_0 \le \bra{s}^{-2}}}  L_1^{-1}\bra{N_1}^{13} \| \psi_{\theo,N_1} \|_{L^2} \|\psi_{\thet,N_2}\|_{L^{\infty}} \|\psi_{\theth,N_3}\|_{L^{\infty}}   ds 	\\
	&\les \int_{t_1}^{t_2}  \sum_{\substack{\textbf{Case (i)} \\ N_0 \le \bra{s}^{-2}}} L_1^{-\de}\bra{N_1}^{13-2k}   s^{-\frac32} \ve_1^3 ds
\end{align*}
and
\begin{align*}
\int_{t_1}^{t_2} \sum_{\substack{\textbf{Case (i)} \\ N_0 \le \bra{s}^{-2}}} \|\eqref{eq:non-space-2}\|_{L_\xi^2} \,ds &\les \int_{t_1}^{t_2} \sum_{\substack{\textbf{Case (i)} \\ N_0 \le \bra{s}^{-2}}} \bra{N_1}^{13}  \| x f_\theo \|_{L^2} \|\psi_{\thet,N_2}\|_{L^{\infty}} \|\psi_{\theth,N_3}\|_{L^{\infty}}  ds \\
&\les \int_{t_1}^{t_2} \sum_{\substack{\textbf{Case (i)} \\ N_0 \le \bra{s}^{-2}}} \bra{N_1}^{13-2k} \bra{s}^{-\frac32} \ve_1^3 ds.
\end{align*}
This implies  \eqref{eq:decay-non-res}  when $\theo = \thet$.

If $\theo \neq \thet$, we utilize a normal form approach by exploiting the time nonresonance
\begin{align}\label{eq:time-nonreso}
	|\phi_\Theta(\xi,\eta,\sigma) | \sim \bra{N_1}.
\end{align} 
 By the integration by parts in time, the integrand of left-hand side of \eqref{eq:decay-non-res} can be bounded by the following terms:
\begin{subequations}
	\begin{align}
		&\begin{aligned}
		&\int_{t_1}^{t_2}\iint_{\mathbb{R}^{2}\times \R^2}\frac{\mathbf{m}_{\textbf{N},\textbf{L}}(\xi,\eta)}{\phi_\Theta(\xi,\eta,\sigma)}e^{is\phi_\Theta(\xi,\eta,\sigma)} \bra{\wh{f_{\thet,N_2}}(s,\xi+\eta+\sigma), \al^\mu \wh{f_{\theth,N_3}}(s,\xi +\sigma)} \\
		&\hspace{8cm} \times \al^\nu\widehat{f_{\theo,N_1}}(s,\xi+\eta) \,d\sigma d\eta ds, \end{aligned}\label{eq:non-normal-1}\\
		&\begin{aligned}
		&\int_{t_1}^{t_2}s\iint_{\mathbb{R}^{2}\times \R^2}\frac{\mathbf{m}_{\textbf{N},\textbf{L}}(\xi,\eta)}{\phi_\Theta(\xi,\eta,\sigma)}e^{is\phi_\Theta(\xi,\eta,\sigma)} \bra{ \wh{f_{\thet,N_2}}(s,\xi+\eta+\sigma), \al^\mu \wh{f_{\theth,N_3}}(s,\xi +\sigma)} \\
		&\hspace{8cm} \times \al^\nu \partial_s \widehat{f_{\theo,N_1}}(s,\xi+\eta) \,d\sigma d\eta ds, \end{aligned}\label{eq:non-normal-2}
	\end{align}
\end{subequations}
plus similar and simpler terms (see  \eqref{eq:normal-t2}--\eqref{eq:normal-f1}). Since we have the multiplier bound
\begin{align}\label{eq:1-non-time-multiplier}
	\normo{\frac{\textbf{m}_{\textbf{N},\textbf{L}}}{\phi_\Theta}}_{\rm CM} \les \frac{N_1\bra{N_0}^5}{\bra{N_1}^3},
\end{align}
 Lemma \ref{lem:coifmey} leads us that
\begin{align*}
 \|	\eqref{eq:non-normal-1} \|_{L_\xi^2} &\les \int_{t_1}^{t_2}  \sum_{\substack{\textbf{Case (i)}\\ N_0 \ge \bra{s}^{-2}}} \frac{N_1\bra{N_0}^5}{\bra{N_1}^3} \|\psi_{\theo,N_1}\|_{L^2} \|\psi_{\thet,N_2}\|_{L^\infty}\|\psi_{\theth,N_2}\|_{L^\infty}    ds\\
 &\les \int_{t_1}^{t_2}  \bra{s}^{-\frac32+\de} \ve_1^3 ds \les \bra{t_1}^{-\frac12+\de}\ve_1^3.
\end{align*}
Using \eqref{eq:esti-timederi-f}, we also see that
\begin{align*}
	\|\eqref{eq:non-normal-2}\|_{L_\xi^2} & \les  \int_{t_1}^{t_2} s \sum_{\substack{\textbf{Case (i)}\\ N_0 \ge \bra{s}^{-2}}} \frac{N_1\bra{N_0}^5}{\bra{N_1}^3} \|\partial_sf_{\theo,N_1}\|_{L^2} \|\psi_{\thet,N_2}\|_{L^\infty}\|\psi_{\theth,N_2}\|_{L^\infty}    ds\\
	&\les  \int_{t_1}^{t_2} s \bra{s}^{-3+\de} \ve_1^3 ds \les \bra{t_1}^{-1+\de}\ve_1^3.
\end{align*}
This finishes the proof for section \ref{sec:subsub1}.

\subsubsection{Estimates for \textbf{Case (ii): $N_1 \ll N_0 \sim L_1$}.}\label{sec:subsub2} As we observed in section \ref{sec:subsub1}, \eqref{eq:condi-support}  implies that 
$$
N_1 \ll N_0 \sim N_2 \sim N_3 \sim L_1 \sim L_2.
$$
Using H\"older inequality, we see that
\begin{align*}
	\int_{t_1}^{t_2}  \normo{ \mathcal I_{\Theta,\textbf{N},\textbf{L}}^3}_{L_\xi^2} \,ds &\les \int_{t_1}^{t_2} s \bra{N_0}^{3}\|\rho_{N_0}\|_{L_\xi^2}\|\rho_{N_1}\|_{L_\eta^\frac43}        \|\wh{f_{\theo}}\|_{L_\eta^4} \|\wh{\psi_{\thet,N_2}}\|_{L_\sigma^2}\|\wh{\psi_{\theth,N_3}}\|_{L_\sigma^2}  \,ds\\
	&\les \int_{t_1}^{t_2} s \bra{N_0}^{4-2s}N_1^\frac32  \,ds.
\end{align*}
From this observation, we have the desired bound by imposing $N_1 \le \bra{s}^{-2}$. Hence, we assume that $ \bra{s}^{-2} \le N_1 $. From the fact that $N_1 \ll N_2$, we observe the inequality  
 \begin{align*}
 	|\nabla_\eta \phi_\Theta|  =\left| \frac{\xi+\eta}{\bra{\xi+\eta}} \pm \frac{\xi+\eta+\sigma}{\bra{\xi+\eta+\sigma}}\right| \gtrsim \frac{N_2}{\bra{N_2}}.
 \end{align*}
We note that this observation does not depend on the sign relation between $\theo$ and $\thet$. Therefore, we get the desired result by the integration by parts with respect to $\eta$ in a similar way to the estimates for \eqref{eq:non-space-1} and \eqref{eq:non-space-2}.

\subsubsection{Estimates for \textbf{Case (iii): $L_1 \ll N_0 \sim N_1$}.}\label{sec:subsub3} In this case, we see that 
$$
N_2 \sim N_3 \sim L_1 \ll N_0 \sim N_1 \sim L_2.
$$
This case can be handled similarly to the estimates as those in section \ref{sec:subsub2}. For this, we further divide into $L_1 \le \bra{s}^{-1}$ and $L_1 \ge \bra{s}^{-1}$.  For the first case $L_1 \le \bra{s}^{-1}$, by H\"older inequality, we have
\begin{align*}
	\sum_{\substack{\textbf{Case (iii)}\\ L_1 \le \bra{s}^{-1} }}\normo{\mathcal I_{\Theta, \textbf{N},\textbf{L}}^3(s,\xi)}_{L_\xi^2} &\les 	\sum_{\substack{\textbf{Case (iii)}\\ L_1 \le \bra{s}^{-1} }} s\bra{N_0}^5L_1 \|\rho_{L_1}\|_{L_\eta^1} \bra{N_0}^{-s}\prod_{j=1}^3 \|\psi_\thej\|_{H^s}   \\
	&\les  \bra{s}^{-2}\ve_1^3.
\end{align*}
Then, we have \eqref{eq:decay-non-res}. Let us move on to $L_1 \ge \bra{s}^{-1}$. In order to obtain the desired bound for $L_1 \ge \bra{s}^{-1}$, we also use the space resonance as follows: since $N_2 \ll N_1$, we get
 \begin{align*}
	|\nabla_\eta \phi_\Theta|  =\left| \frac{\xi+\eta}{\bra{\xi+\eta}} \pm \frac{\xi+\eta+\sigma}{\bra{\xi+\eta+\sigma}}\right| \gtrsim \frac{N_1}{\bra{N_1}},
\end{align*}
which enables us to utilize the space resonance approach as in section \ref{sec:subsub2}. Therefore, this finishes the proof of \eqref{eq:decay-non-res} for section \ref{sec:subsub3}.
 
 \subsubsection{Estimates for \textbf{Case (iv): $L_1 \sim N_0 \sim N_1$}.}
 Since all frequencies are comparable in this case, \eqref{eq:condi-support} implies
 $$
 L_2 \les N_0 \sim N_1 \sim N_2 \sim N_3 \sim L_1.
 $$
 Then we need to consider the various resonance of phase function with respect to the relation between $\thez$, $\theo$ and $\thet$. 
 
\textbf{The case $\thez=\theo = \thet$.} Let us further decompose into two cases $L_2 \ll N_0$ and $L_2 \sim N_0$. If $L_2 \ll N_0$, we have
\begin{align*}
	|\bra{\xi+\eta} - \bra{\xi+\eta+\sigma}| \les L_2 \ll \bra{N_0}.
\end{align*}
This yields the time nonresonance
\begin{align*}
	|\phi_\Theta(\xi,\eta,\sigma)| \sim \bra{N_0},
\end{align*}
which enables us to perform the normal form approach similarly to \eqref{eq:time-nonreso} to obtain the following terms:
\begin{subequations}
	\begin{align}
		&\begin{aligned}
			&\int_{t_1}^{t_2}\iint_{\mathbb{R}^{2}\times \R^2}\frac{\mathbf{m}_{\textbf{N},\textbf{L}}(\xi,\eta)}{\phi_\Theta(\xi,\eta,\sigma)}e^{is\phi_\Theta(\xi,\eta,\sigma)} \bra{\wh{f_{\thet,N_2}}(s,\xi+\eta+\sigma), \al^\mu \wh{f_{\theth,N_3}}(s,\xi +\sigma)} \\
			&\hspace{8cm} \times \al^\nu\widehat{f_{\theo,N_1}}(s,\xi+\eta) \,d\sigma d\eta ds, \end{aligned}\label{eq:4-non-normal-1}\\
		&\begin{aligned}
			&\int_{t_1}^{t_2}s\iint_{\mathbb{R}^{2}\times \R^2}\frac{\mathbf{m}_{\textbf{N},\textbf{L}}(\xi,\eta)}{\phi_\Theta(\xi,\eta,\sigma)}e^{is\phi_\Theta(\xi,\eta,\sigma)} \bra{ \wh{f_{\thet,N_2}}(s,\xi+\eta+\sigma), \al^\mu \wh{f_{\theth,N_3}}(s,\xi +\sigma)} \\
			&\hspace{8cm} \times \al^\nu \partial_s \widehat{f_{\theo,N_1}}(s,\xi+\eta) \,d\sigma d\eta ds, \end{aligned}\label{eq:4-non-normal-2}
	\end{align}
\end{subequations}
plus similar and simpler terms. As in \eqref{eq:1-non-time-multiplier}, we get
\begin{align*}
	\normo{\frac{\textbf{m}_{\textbf{N},\textbf{L}}}{\phi_\Theta}}_{\rm CM} \les N_0\bra{N_0}^2,
\end{align*}
and therefore, by Lemma \ref{lem:coifmey}, we estimate
\begin{align}
	\begin{aligned}\label{eq:4-esti-non-normal1}
	\|	\eqref{eq:4-non-normal-1} \|_{L_\xi^2} &\les \int_{t_1}^{t_2}  \sum_{\substack{\textbf{Case (iv)}\\ L_2 \le \bra{s}^{-2}}} N_0\bra{N_0}^2 \|\rho_{L_1}\|_{L_\eta^2} \|\rho_{L_2}\|_{L_\sigma^2} \prod_{j=1}^3\|\psi_{\thej,N_j}\|_{L^2} \,ds \\
	&\hspace{1cm} + \int_{t_1}^{t_2}  \sum_{\substack{\textbf{Case (iv)}\\ L_2 \ge \bra{s}^{-2}}} N_0\bra{N_0}^2 \|\psi_{\theo,N_1}\|_{L^2} \|\psi_{\thet,N_2}\|_{L^\infty}\|\psi_{\theth,N_3}\|_{L^\infty}    ds\\
	&\les \int_{t_1}^{t_2}  \bra{s}^{-\frac32+\de} \ve_1^3 ds \les \bra{t_1}^{-\frac12+\de}\ve_1^3.
	\end{aligned}
\end{align}
Using \eqref{eq:esti-timederi-f}, we also see that
\begin{align}
	\begin{aligned}\label{eq:4-esti-non-normal2}
	\|\eqref{eq:4-non-normal-2}\|_{L_\xi^2} & \les  \int_{t_1}^{t_2} s \sum_{\substack{\textbf{Case (iv)}\\ L_2 \le \bra{s}^{-2}}} N_0\bra{N_0}^2 \|\rho_{L_1}\|_{L_\eta^2} \|\rho_{L_2}\|_{L_\sigma^2} \|\partial_sf_{\theo,N_1}\|_{L^2}  \sum_{j=2}^3 \|\psi_{\thej,N_j}\|_{L^2}  \,ds\\
	&\hspace{1cm}  +\int_{t_1}^{t_2} s \sum_{\substack{\textbf{Case (iv)}\\ L_2 \ge \bra{s}^{-2}}} N_0 \bra{N_0}^2 \|\partial_sf_{\theo,N_1}\|_{L^2} \|\psi_{\thet,N_2}\|_{L^\infty}\|\psi_{\theth,N_2}\|_{L^\infty}    ds\\
	&\les  \int_{t_1}^{t_2} s \bra{s}^{-3+\de} \ve_1^3 ds \les \bra{t_1}^{-1+\de}\ve_1^3.
	\end{aligned}
\end{align}

We turn to the case $L_2 \sim N_0$. To estimate \eqref{eq:decay-non-res} for this case, we use the space resonance as follows: by integration by parts in $\eta$ with
\begin{align}\label{eq:4-non-space-resonance}
	|\nabla_\eta \phi_\Theta(\xi,\eta,\sigma)| = \left|\frac{\xi+\eta}{\bra{\xi+\eta}} - \frac{\xi+\eta+\sigma}{\bra{\xi+\eta+\sigma}} \right|\gtrsim \frac{N_0}{\bra{N_0}^3},
\end{align}
we get contribution of the form
\begin{subequations}
	\begin{align}
		&\begin{aligned}
			&\int_{t_1}^{t_2}\iint_{\mathbb{R}^{2}\times \R^2}\nabla_\eta \wt{\mathbf{m}_{\textbf{N},\textbf{L}}}(\xi,\eta,\sigma)e^{is\phi_\Theta(\xi,\eta,\sigma)} \bra{\wh{f_{\thet,N_2}}(s,\xi+\eta+\sigma), \al^\mu \wh{f_{\theth,N_3}}(s,\xi +\sigma)} \\
			&\hspace{8cm} \times \al^\nu\widehat{f_{\theo,N_1}}(s,\xi+\eta) \,d\sigma d\eta ds, \end{aligned}\label{eq:4-non-space-1}\\
		&\begin{aligned}
			&\int_{t_1}^{t_2}\iint_{\mathbb{R}^{2}\times \R^2}\wt{\mathbf{m}_{\textbf{N},\textbf{L}}}(\xi,\eta,\sigma)e^{is\phi_\Theta(\xi,\eta,\sigma)} \bra{ \wh{f_{\thet,N_2}}(s,\xi+\eta+\sigma), \al^\mu \wh{f_{\theth,N_3}}(s,\xi +\sigma)} \\
			&\hspace{8cm} \times \al^\nu  \nabla_\eta \widehat{f_{\theo,N_1}}(s,\xi+\eta) \,d\sigma d\eta ds, \end{aligned}\label{eq:4-non-space-2}
	\end{align}
\end{subequations}
and symmetric term. Here we use same  notation to \eqref{eq:1-non-multiplier}. Since all supports for frequencies are comparable, it is enough to estimate without considering the sum. Hence, we estimate
\begin{align*}
	\|\eqref{eq:4-non-space-1}\|_{L_\xi^2} \les \int_{t_1}^{t_2}  \sum_{\substack{\textbf{Case (iv)} \\L_2 \sim N_0}} L_1^{-1}\bra{N_1}^{13} \| \psi_{\theo,N_1} \|_{L^2} \|\psi_{\thet,N_2}\|_{L^{\infty}} \|\psi_{\theth,N_3}\|_{L^{\infty}}   ds 	\les \int_{t_1}^{t_2}   s^{-\frac32} \ve_1^3 ds
\end{align*}
and
\begin{align*}
	\|\eqref{eq:4-non-space-2}\|_{L_\xi^2} \les \int_{t_1}^{t_2} \sum_{\substack{\textbf{Case (iv)} \\L_2 \sim N_0}} \bra{N_1}^{13}  \| x f_\theo \|_{L^2} \|\psi_{\thet,N_2}\|_{L^{\infty}} \|\psi_{\theth,N_3}\|_{L^{\infty}}  ds \les \int_{t_1}^{t_2}  \bra{s}^{-\frac32} \ve_1^3 ds.
\end{align*}
The proof  for the case $\thez =\theo = \thet$ is done.

\textbf{The case $\thez = \theo \neq \thet$.} To handle this case, we exploit the time nonresonance phase function. Indeed, we give simple observation as follows: since $N_0 \sim N_1 \sim N_2$, we have
\begin{align*}
	\bra{\xi} - \bra{\xi+\eta} +\bra{\eta} 
	&= C\bra{N_0}^{-1} \left(-\bra{\xi}^2 + (\bra{\xi+\eta}-\bra{\eta})^2\right) \\
	&=  2C\bra{N_0}^{-1} \left[ \Big(\bra{\xi+\eta}\bra{\eta} - (|\xi+\eta||\eta|+1) \Big) + \Big(|\xi+\eta||\eta|+(\xi+\eta)\cdot\eta\Big) +\frac32 \right]\\
	& \ge 2C \bra{N_0}^{-1}\left(\frac{(|\xi+\eta|-|\eta|)^2}{\bra{\xi+\eta}\bra{\eta}+|\xi+\eta||\eta| +1} + \frac32 \right).
\end{align*}
We refer to Lemma 8.7 of \cite{canher2018} for other relations including this resonance. Then we have
\begin{align*}
	\phi_\Theta(\xi,\eta,\sigma) = \Big(-\bra{\xi} +\bra{\xi+\eta} + \bra{\eta}\Big) +\Big(- \bra{\eta}+ \bra{\xi+\eta+\sigma} + \bra{\xi+\sigma} \Big) \ge C\bra{N_0}^{-1}.
\end{align*}
Then one can perform the normal form approach to obtain  \eqref{eq:4-non-normal-1}, \eqref{eq:4-non-normal-2}, and similar  terms. The estimates for these terms agree exactly with \eqref{eq:4-esti-non-normal1} and \eqref{eq:4-esti-non-normal2}. 

\textbf{The case $\thez \neq \theo, \thez = \thet$.} This case can be estimated in a same way to the above case with the time nonresonance
\begin{align*}
	|\phi_\Theta(\xi,\eta,\sigma)| = |\bra{\xi}+\bra{\xi+\eta}+\bra{\xi+\eta+\sigma}+\bra{\xi+\sigma}| \sim \bra{N_0}.
\end{align*}

\textbf{The case $\thez \neq \theo, \thez \neq \thet$.} In order to prove desired estimates for this final case, we exploit the space resonance
\begin{align}\label{eq:4-final-multiplier}
	|\nabla_\eta \phi_\Theta(\xi,\eta,\sigma)| = \left|\frac{\xi+\eta}{\bra{\xi+\eta}} - \frac{\xi+\eta+\sigma}{\bra{\xi+\eta+\sigma}} \right| \gtrsim \frac{L_2}{\bra{N_0}^3}.
\end{align}
If $L_2 \sim N_0$, \eqref{eq:4-final-multiplier} is exactly same as \eqref{eq:4-non-space-resonance}.  Thus, we only consider $L_2 \ll N_0$. In this case, since the singularity arising from \eqref{eq:4-final-multiplier} is the most delicate and there is no time nonresonance of the phase function, we need to employ the extra null structure to give a cancellation to the singularity $L_2^{-1}$. For this, we recall $\mathcal I_{\Theta,\textbf{N},\textbf{L}}^4(s,\xi)$
\begin{align*}
	\mathcal I_{\Theta, \textbf{N},\textbf{L}}^4(s,\xi) &= s\iint_{\mathbb{R}^{2}\times \R^2}\mathbf{m}_{\textbf{N},\textbf{L}}(\xi,\eta)e^{is\phi_\Theta(\xi,\eta,\sigma)} \bra{\wh{f_{\thet,N_2}}(s,\xi+\eta+\sigma), \al^\mu \wh{f_{\theth,N_3}}(s,\xi +\sigma)}\\
	&\hspace{8.5cm}\times  \al^\nu\widehat{f_{\theo,N_1}}(s,\xi+\eta) \,d\sigma d\eta.
\end{align*}
Note that the multiplier
\[
\mathbf{m}_{\textbf{N},\textbf{L}}(\xi,\eta) =\Pi_{\thez}(\xi) \bra{\xi}^5 \bra{\eta}^{-1} \left(\thez\frac{\xi}{\langle\xi\rangle}-\theo\frac{\xi+\eta}{\langle\xi+\eta\rangle} \right) \rho_{\textbf{N},\textbf{L}}(\xi,\eta,\sigma).
\]
Then, using a simple observation
\begin{align*}
	\thez\frac{\xi}{\langle\xi\rangle}-\theo\frac{\xi+\eta}{\langle\xi+\eta\rangle} = \nabla_\xi \phi_\Theta(\xi,\eta,\sigma)  - \nabla_\sigma \phi_\Theta(\xi,\eta,\sigma),
\end{align*}
we see that
\begin{align*}
		\mathcal I_{\Theta, \textbf{N},\textbf{L}}^4(s,\xi) = 	\mathcal J_{\Theta, \textbf{N},\textbf{L}}^1(s,\xi) - 	\mathcal J_{\Theta, \textbf{N},\textbf{L}}^2(s,\xi),
\end{align*}
where
\begin{align*}
	\mathcal J_{\Theta, \textbf{N},\textbf{L}}^1(s,\xi) &= s\iint_{\mathbb{R}^{2}\times \R^2}\mathbf{m}_{\textbf{N},\textbf{L}}^1(\xi,\eta)e^{is\phi_\Theta(\xi,\eta,\sigma)} \bra{\wh{f_{\thet,N_2}}(s,\xi+\eta+\sigma), \al^\mu \wh{f_{\theth,N_3}}(s,\xi +\sigma)}\\
	&\hspace{8.5cm}\times  \al^\nu\widehat{f_{\theo,N_1}}(s,\xi+\eta) \,d\sigma d\eta,\\
	\mathcal J_{\Theta, \textbf{N},\textbf{L}}^2(s,\xi) &= s\iint_{\mathbb{R}^{2}\times \R^2}\mathbf{m}_{\textbf{N},\textbf{L}}^2(\xi,\eta)e^{is\phi_\Theta(\xi,\eta,\sigma)} \bra{\wh{f_{\thet,N_2}}(s,\xi+\eta+\sigma), \al^\mu \wh{f_{\theth,N_3}}(s,\xi +\sigma)}\\
	&\hspace{8.5cm}\times  \al^\nu\widehat{f_{\theo,N_1}}(s,\xi+\eta) \,d\sigma d\eta.
\end{align*}
Here the multipliers are defined as
\begin{align*}
	\mathbf{m}_{\textbf{N},\textbf{L}}^1(\xi,\eta,\sigma) =\Pi_{\thez}(\xi) \bra{\xi}^5 \bra{\eta}^{-1} \nabla_\xi \phi_\Theta(\xi,\eta,\sigma) \rho_{\textbf{N},\textbf{L}}(\xi,\eta,\sigma),\\
		\mathbf{m}_{\textbf{N},\textbf{L}}^2(\xi,\eta,\sigma) =\Pi_{\thez}(\xi) \bra{\xi}^5 \bra{\eta}^{-1} \nabla_\sigma \phi_\Theta(\xi,\eta,\sigma) \rho_{\textbf{N},\textbf{L}}(\xi,\eta,\sigma).
\end{align*}

\begin{rem}
	Compared to the conventional multiplier, $\nabla_\xi \phi_\Theta$ in {\rm $\textbf{m}_{\textbf{N},\textbf{L}}^1$} contains the $\sigma$-smallness from sign relation $\thez \neq \theo$, $ \thez \neq \thet$ and thus, it makes, handling the singularity arising from the space resonance, possible. Moreover,  $\nabla_\sigma \phi_\Theta$ in {\rm $\textbf{m}_{\textbf{N},\textbf{L}}^2$} gives the time decay effect as follows:
	\begin{align*}
		(\nabla_\sigma \phi_\Theta) e^{is\phi_\Theta} = \frac1s \nabla_\sigma e^{is\phi_\Theta}.
	\end{align*}
	Therefore, integration by parts in $\sigma$ leads us the desired result.	In the rest of this paper, we devote to show these approaches.
\end{rem}
By the observation \eqref{eq:4-final-multiplier}, $\mathcal J_{\Theta,\textbf{N},\textbf{L}}^1$ can be bounded as follows:
\begin{subequations}
	\begin{align}
		&\begin{aligned}
		 &\iint_{\R^2 \times \R^2}\nabla_\eta\wt{ \mathbf{m}_{\textbf{N},\textbf{L}}^1}(\xi,\eta,\sigma)e^{is\phi_\Theta(\xi,\eta,\sigma)} \bra{\wh{f_{\thet,N_2}}(s,\xi+\eta+\sigma), \al^\mu \wh{f_{\theth,N_3}}(s,\xi +\sigma)}\\
		 &\hspace{8.5cm}\times  \al^\nu\widehat{f_{\theo,N_1}}(s,\xi+\eta) \,  d\sigma d\eta, \label{eq:4-finial-space1}
		 \end{aligned}\\
		&\begin{aligned}
			&\iint_{\R^2 \times \R^2}\wt{ \mathbf{m}_{\textbf{N},\textbf{L}}^1}(\xi,\eta,\sigma)e^{is\phi_\Theta(\xi,\eta,\sigma)} \bra{\wh{f_{\thet,N_2}}(s,\xi+\eta+\sigma), \al^\mu \wh{f_{\theth,N_3}}(s,\xi +\sigma)}\\
			&\hspace{8.5cm}\times  \al^\nu \nabla_\eta\widehat{f_{\theo,N_1}}(s,\xi+\eta) \,  d\sigma d\eta, \label{eq:4-finial-space2}
		\end{aligned}
	\end{align}
\end{subequations}
and the symmetric term, where
\begin{align*}
	\wt{ \mathbf{m}_{\textbf{N},\textbf{L}}^1}(\xi,\eta,\sigma) = \frac{\nabla_\eta \phi_\Theta(\xi,\eta,\sigma)\textbf{m}_{\textbf{N},\textbf{L}}^1(\xi,\eta,\sigma)}{|\nabla_\eta \phi_\Theta(\xi,\eta,\sigma)|^2}.
\end{align*}
Since
\begin{align*}
| \nabla_\xi \phi_\Theta(\xi,\eta,\sigma)| &\les \left| \frac{\xi}{\langle\xi\rangle} +\frac{\xi+\eta}{\langle\xi+\eta\rangle} -\frac{\xi+\eta+\sigma}{\langle\xi +\eta +\sigma\rangle}-\frac{\xi+\sigma}{\langle\xi+\sigma\rangle}\right| \\
&\les \left| \frac{\xi}{\langle\xi \rangle}-\frac{\xi+\sigma}{\langle\xi+\sigma\rangle}\right| + \left| \frac{\xi+\eta}{\langle\xi + \eta  \rangle}-\frac{\xi+\eta+\sigma}{\langle\xi+\eta+\sigma\rangle}\right| \les \frac{L_2}{\bra{N_0}},
\end{align*}
we have the multiplier estimate
\begin{align*}
	\normo{\wt{ \mathbf{m}_{\textbf{N},\textbf{L}}^1}}_{\rm CM} \les \bra{N_0}^{14}.
\end{align*}
Using this, we see that
\begin{align*}
	\int_{t_1}^{t_2}\|\eqref{eq:4-finial-space1}\|_{L_\xi^2}  \,ds &\les \int_{t_1}^{t_2} \sum_{\substack{\textbf{Case (iv)} \\ L_2 \le \bra{s}^{-2}}} L_1^{-1}\bra{N_0}^{6} \|\rho_{L_1}\|_{L_\eta^2}\|\rho_{L_2}\|_{L_\sigma^2}\prod_{j=1}^{3} \|\psi_{\thej,N_j}\|_{L^2}   \,ds\\
	&\hspace{1cm} +\int_{t_1}^{t_2} \sum_{\substack{\textbf{Case (iv)} \\ L_2 \ge \bra{s}^{-2}}} L_1^{-1}\bra{N_0}^{14}\|\psi_{\theo,N_1}\|_{L^2} \|\psi_{\thet,N_2}\|_{L^\infty}\|\psi_{\theth,N_3}\|_{L^\infty}   \,ds\\
	&\les \int_{t_1}^{t_2} \bra{s}^{-2+\de} \ve_1^3 \,ds \les \bra{t_1}^{-1+\de}\ve_1^3
\end{align*}
and
\begin{align*}
	\int_{t_1}^{t_2}\|\eqref{eq:4-finial-space2}\|_{L_\xi^2}  \,ds &\les \int_{t_1}^{t_2} \sum_{\substack{\textbf{Case (iv)} \\ L_2 \le \bra{s}^{-2}}} \bra{N_0}^{6} \|\rho_{L_1}\|_{L_\eta^2}\|\rho_{L_2}\|_{L_\sigma^2} \|xf_{\theo}\|_{L^2} \prod_{j=2}^{3} \|\psi_{\thej,N_j}\|_{L^2}   \,ds\\
	&\hspace{1cm} +\int_{t_1}^{t_2} \sum_{\substack{\textbf{Case (iv)} \\ L_2 \ge \bra{s}^{-2}}} \bra{N_0}^{14}\|f_{\theo}\|_{L^2} \|\psi_{\thet,N_2}\|_{L^\infty}\|\psi_{\theth,N_3}\|_{L^\infty}   \,ds\\
	&\les \int_{t_1}^{t_2} \bra{s}^{-2+\de} \ve_1^3 \,ds \les \bra{t_1}^{-1+\de}\ve_1^{3}.
\end{align*}

Let us move on to the estimates for $\mathcal J_{\Theta,\textbf{N},\textbf{L}}^2$. Reassembling the cutoff functions again, we get  
\begin{align}
	\begin{aligned}\label{eq:4-final-sigma}
	\sum_{\textbf{N},\textbf{L}} J_{\Theta,\textbf{N},\textbf{L}}^2 &=  s \iint_{\mathbb{R}^{2}\times \R^2}\Pi_{\thez}(\xi)\bra{\xi}^{5}\bra{\eta}^{-1} \nabla_\sigma \phi_\Theta(\xi,\eta,\sigma)
	e^{is\phi_\Theta(\xi,\eta,\sigma)} \\
	&\hspace{3cm}\times  \bra{\wh{f_{\thet}}(s,\xi+\eta+\sigma), \al^\mu \wh{f_{\theth}}(s,\xi +\sigma)} \al^\nu\widehat{f_{\theo}}(s,\xi+\eta) \,d\sigma d\eta.
	\end{aligned}
\end{align}
Using integration by parts in $\sigma$ with the relation
\begin{align*}
	\nabla_\sigma \phi_\Theta(\xi,\eta,\sigma)
	e^{is\phi_\Theta(\xi,\eta,\sigma)} = \frac1s \nabla_\sigma 	e^{is\phi_\Theta(\xi,\eta,\sigma)},
\end{align*}
the right-hand side of \eqref{eq:4-final-sigma} 
is bounded by the two symmetric terms:
\begin{subequations}
	\begin{align}
	\begin{aligned}\label{eq:4-final-space3}
		& \iint_{\mathbb{R}^{2}\times \R^2}\Pi_{\thez}(\xi)\bra{\xi}^{5}\bra{\eta}^{-1}
		e^{is\phi_\Theta(\xi,\eta,\sigma)} \bra{\nabla_\sigma\wh{f_{\thet}}(s,\xi+\eta+\sigma), \al^\mu \wh{f_{\theth}}(s,\xi +\sigma)}\\
		&\hspace{8.5cm}\times  \al^\nu\widehat{f_{\theo}}(s,\xi+\eta) \,d\sigma d\eta,
	\end{aligned}\\
	\begin{aligned}\label{eq:4-final-space4}
	& \iint_{\mathbb{R}^{2}\times \R^2}\Pi_{\thez}(\xi)\bra{\xi}^{5}\bra{\eta}^{-1}
	e^{is\phi_\Theta(\xi,\eta,\sigma)} \bra{\wh{f_{\thet}}(s,\xi+\eta+\sigma), \al^\mu \nabla_\sigma \wh{f_{\theth}}(s,\xi +\sigma)}\\
	&\hspace{8.5cm}\times  \al^\nu\widehat{f_{\theo}}(s,\xi+\eta) \,d\sigma d\eta.
\end{aligned}
	\end{align}
\end{subequations}
Then we have
\begin{align*}
	\int_{t_1}^{t_2}\|\eqref{eq:4-final-space3}\|_{L_\xi^2} \,ds&\les \int_{t_1}^{t_2} \normo{\bra{D}^{-1}\bra{e^{\thet it \bra{D}}xf_\thet,\al^\mu \psi_\theth} \al^\nu  \psi_\theo    }_{H^5}     \, ds\\
	&\les \int_{t_1}^{t_2} \|xf_\thet\|_{H^5} \|\psi_\theth\|_{W^{k,\infty}} \|\psi_\theth\|_{W^{k,\infty}}   \, ds \les \bra{t_1}^{-\de} \ve_1^3
\end{align*}
and \eqref{eq:4-final-space4} can be estimated similarly. 
This completes the proof of Proposition \ref{prop:energy-weight}. \\
%
%
%
\subsection*{Acknowledgement}
H. Huh was supported by the National Research Foundation of Korea(NRF) grant
funded by the Korea government (MSIT) (2020R1F1A1A01072197). 
K. Lee was supported in part by  the National Research Foundation of Korea(NRF) grant funded by the Korea government No. 2022R1I1A1A01056408 and No. 2019R1A5A1028324.


\end{document}